\documentclass[11pt]{article}

\usepackage{amstext,amssymb,amsmath,amsbsy}
\usepackage{xcolor}
\usepackage{hyperref}
\usepackage{amscd}
\usepackage{amsfonts}
\usepackage{indentfirst}
\usepackage{verbatim}
\usepackage{amsmath}
\usepackage{amsthm}
\usepackage{enumerate}
\usepackage{graphicx}
\usepackage{color, soul}
\usepackage[OT1]{fontenc}
\usepackage[latin1]{inputenc}
\usepackage[english]{babel}
\usepackage{amssymb}
\usepackage{subfig}
\usepackage{algorithm}
\usepackage{algpseudocode}

\usepackage{mathtools}

\newcommand{\R}{\mathbb{R}}

\newcommand{\ds}{\displaystyle}

\newcommand{\x}{{\bf x}}

\newcommand{\p}{{\bf p}}

\newcommand{\Div}{{\rm div}}

\setcounter{MaxMatrixCols}{10}

\textwidth = 16.6cm
 \textheight = 22.15cm
 \topmargin = -1.0cm
 \headsep =20pt
 \oddsidemargin = -.04cm
 \evensidemargin = -.04cm

\newtheorem{Theorem}{Theorem}[section]
\newtheorem{Lemma}{Lemma}[section]

\newtheorem{Corollary}{Corollary}[section]
\newtheorem{remark}{Remark}[section]

\newtheorem*{Assumption*}{Assumption}

\newtheorem{problem}{Problem}[section]
\newtheorem*{problem*}{Problem}
\setcounter{equation}{0}
\numberwithin{equation}{section}

\begin{document}

\title{Determining initial conditions for nonlinear hyperbolic equations with time dimensional reduction and the Carleman contraction principle}

\author{Trong D. Dang\thanks{Faculty of Mathematics and Computer Science, University of Science, Vietnam National University, Ho Chi Minh City, Vietnam, \texttt{ddtrong@hcmus.edu.vn}.} \and Loc H. Nguyen\thanks{Department of Mathematics and Statistics, University of North Carolina at
Charlotte, Charlotte, NC, 28223, USA, \texttt{loc.nguyen@charlotte.edu}, corresponding author.} 
\and
Huong T. T. Vu\thanks{Department of Information Technology, University of Finance-Marketing, Ho Chi Minh City, Vietnam, \texttt{vtthuong@ufm.edu.vn}.}}

%\affil[]{Department of Mathematics and Statistics, University of North Carolina at
%Charlotte, Charlotte, NC 28223, USA}

\date{}
\maketitle
\begin{abstract}
This paper aims to determine the initial conditions for quasi-linear hyperbolic equations that include nonlocal elements. We suggest a method where we approximate the solution of the hyperbolic equation by truncating its Fourier series in the time domain with a polynomial-exponential basis. This truncation effectively removes the time variable, transforming the problem into a system of quasi-linear elliptic equations. We refer to this technique as the ``time dimensional reduction method."
To numerically solve this system comprehensively without the need for an accurate initial estimate, we used the newly developed Carleman contraction principle. We show the efficiency of our method through various numerical examples.
The time dimensional reduction method stands out not only for its precise solutions but also for its remarkable speed in computation.
\end{abstract}

\noindent{\it Key words: numerical methods;
Carleman estimate; 
contraction principle;
globally convergent numerical method,
quasi-linear hyperbolic equations.
}

\noindent{\it AMS subject classification: 	35R30;
65M32.

}

\section{Introduction} \label{sec intr}

Let $\Omega$ be an open and bounded domain of $\R^d.$
Assume the $\Omega$ has a smooth boundary.
Let $T$ be a positive number that represents the final time.
Let $\mathcal F$ be an operator acting on $C^2(\overline \Omega \times [0, T])$ defined as
\begin{equation}
	\mathcal F(v)(\x, t) = F\Big(\x, t, v, \nabla v, v_t, \int_0^t K(s)v(\x, s)ds\Big)
	\label{1.1}
\end{equation}
for all $(\x, t) \in \overline \Omega \times [0, T]$ 
where the kernel $K: \R \to \R$ 
and the map 
$F: \overline \Omega \times [0, T] \times \R \times \R^d \times \R \times \R \to \R$ are given.
Let $u = u(\x, t)$ be the solution to the following initial value problem
\begin{equation}
	\left\{
	\begin{array}{rcll}
		u_{tt} &=& \Delta u + \mathcal F(u) & (\x, t) \in \Omega \times (0, T),\\
		u(\x, t) &=& 0 &(\x, t) \in \partial\Omega \times (0, T),\\
		u(\x, 0) &=& g(\x) &\x \in \Omega,\\
		u_t(\x, 0) &=& 0 &\x \in \Omega.
	\end{array}
	\right.
	\label{main}
\end{equation}
Here,
 $g: \overline \Omega \to \R$ is a smooth function.
The inverse problem we are interested in is formulated as follows.
\begin{problem}[An inverse source problem for quasi-linear nonlocal hyperbolic equation]
	Given the boundary data
	\begin{equation}		
		h(\x, t) = \partial_{\nu} u(\x, t) 
		\quad 
		\mbox{for all } (\x, t) \in \partial \Omega \times (0, T),
		\label{data}
	\end{equation}	 
	compute the function $g(\x)$ for $\x \in \Omega$.
	\label{isp}
\end{problem}

Identifying the optimal conditions for the existence and uniqueness of solutions to a complex problem like \eqref{main} is a substantial challenge. This paper does not delve into exploring these aspects for \eqref{main}; rather, these aspects are accepted as given assumptions. For completeness, we present a set of conditions for $\p$ and $F$ that ensure the existence and regularity of solutions to \eqref{main}. We assume that $\p$ is smooth and has support compactly contained in $\Omega$ and that $F$ is independent of the first-order derivatives of $u$. Additionally, assume that $|\mathcal F u|$ is bounded by $C_1|u| + C_2$ for any function $u$ within $H^2(\Omega \times (0, T))$, where $C_1$ and $C_2$ represent positive constants. Under these conditions, the unique resolution of \eqref{main} can be guaranteed by applying \cite[Theorem 10.14]{Brezis:Springer2011}, originally proven by Lions in \cite{Lions:sp1972}. For more insights on the feasibility of \eqref{main} in linear contexts, references such as \cite{Evans:PDEs2010, Ladyzhenskaya:sv1985} are recommended.

Problem \ref{isp} represents a nonlinear version of a crucial issue in biomedical imaging, specifically thermo/photo-acoustic tomography. The experimental procedure for this problem includes applying non-ionizing laser pulses or microwaves to the biological tissue being examined, such as in mammography of a woman's breast, as explained in \cite{Krugeretal:mp1995, Krugerelal:mp1999, Oraevskyelal:ps1994}. Part of this energy gets absorbed and transforms into heat, causing thermal expansion and generating ultrasonic waves. Capturing these ultrasonic pressure waves on a surrounding surface gives insights into the tissue's internal structure.
Current solutions for thermo/photo-acoustic tomography are largely based on linear hyperbolic equations and involve methods like explicit reconstruction in open space \cite{DoKunyansky:ip2018, Haltmeier:cma2013, Natterer:ipi2012, Linh:ipi2009}, time reversal \cite{KatsnelsonNguyen:aml2018, Hristova:ip2009, HristovaKuchmentLinh:ip2006, Stefanov:ip2009, Stefanov:ip2011}, quasi-reversibility \cite{ClasonKlibanov:sjsc2007, LeNguyenNguyenPowell:JOSC2021}, and iterative approaches \cite{Huangetal:IEEE2013, Paltaufetal:ip2007, Paltaufetal:osa2002}. These studies focus on simple models of non-damping, isotropic media. For more intricate models with damping or attenuation terms, refer to \cite{Ammarielal:sp2012, Ammarietal:cm2011, Haltmeier:jmiv2019, Acosta:jde2018, Burgholzer:pspie2007, Homan:ipi2013, Kowar:SISI2014, Kowar:sp2012, Nachman1990}.
The paper \cite{NguyenKlibanov:ip2022} addresses the nonlinear aspect, using Carleman estimates and the contraction principle to manage nonlinearity. Our approach, similar to \cite{NguyenKlibanov:ip2022}, also utilizes these techniques to resolve Problem \ref{isp}. However, this paper introduces a novel element compared to \cite{NguyenKlibanov:ip2022}: the time-dimensional reduction, which involves truncating the Fourier series in time. This enhancement allows our solver to handle a wider range of nonlinearities and significantly boosts computational efficiency.
Additionally, the time-reduction method improves the convergence rate against noise. While the convergence rate in \cite{NguyenKlibanov:ip2022} is of H\"older type, reducing the time domain regularizes the inverse problem, leading to Lipschitz-type convergence.

Traditional approaches for addressing nonlinear inverse problems predominantly rely on optimization techniques. These methods are local, providing effective solutions when there are accurate initial solution estimates. 
Addressing nonlinear problems without requiring a reliable initial guess poses an intriguing, challenging, and significant issue in science. A widely recognized strategy for globally solving nonlinear inverse problems is known as convexification. The key principle of convexification is to employ suitable Carleman weight functions to make the mismatch functional convex, a process thoroughly supported by the established Carleman estimates. Since its invention in \cite{KlibanovIoussoupova:SMA1995}, various adaptations of the convexification approach have emerged \cite{KlibanovNik:ra2017, KhoaKlibanovLoc:SIAMImaging2020, Klibanov:sjma1997, Klibanov:nw1997, Klibanov:ip2015, KlibanovKolesov:cma2019, KlibanovLiZhang:ip2019, KlibanovLeNguyenIPI2022, KlibanovLiZhang:SIAM2019, LeNguyen:JSC2022}. Notably, it has been effectively applied in experimental settings for inverse scattering problems in the frequency domain using only backscattering data \cite{VoKlibanovNguyen:IP2020, Khoaelal:IPSE2021, KlibanovLeNguyenIPI2022}. However, a notable drawback of the convexification method is its time-consuming.
To address this, we leverage the techniques from \cite{LeCON2023, LeNguyen:jiip2022, Nguyen:AVM2023} to introduce a novel method that combines Fourier expansion, fixed-point iteration, the contraction principle, and an appropriate Carleman estimate for globally solving nonlinear inverse problems. By ``global," we mean our method does not require an initial guess close to the true solution.

Our method involves the following main ingredients:
\begin{enumerate}
	\item {\bf Time reduction}.  We propose to truncate the Fourier series of the solution to \eqref{main} using a polynomial-exponential basis in time. This crucial step converts a problem from $d + 1$ dimensions to one with $d$ dimensions, significantly boosting computational speed. It also allows us to address Problem \ref{isp} even when the nonlinearity $F$ involves a memory term, like the Volterra integral mentioned in \eqref{1.1}.
	\item {\bf Fixed point-like iteration}. After minimizing the time domain, we can derive a system of quasi-linear elliptic equations. We tackle this system by reformulating it into the equation $\Phi(x) = x$, where $\Phi$ is an operator incorporating specific Carleman weight functions. Using Carleman estimates, we demonstrate that $\Phi$ acts as a contraction mapping, leading us directly to the solution of the inverse problem through the calculated fixed point of $\Phi$.
\end{enumerate}

The primary advantages of our approach are:
\begin{enumerate}
	\item It does not necessitate an accurate initial guess.
	\item It is broadly applicable, as it does not impose specific structures on the nonlinearity $F$.
	\item It achieves a rapid rate of convergence.
\end{enumerate}

The paper is organized as follows. 
In Section \ref{sec_app}, we present the time dimensional reduction method.
In Section \ref{sec_contraction}, we construct a map and show that this map is contractive. Section \ref{sec4} demonstrates how the fixed point of the aforementioned contraction mapping approximates the true solution.
Section \ref{num} is to present some numerical examples.
Section \ref{sec6} is for concluding remarks.

\section{The time dimensional reduction model}\label{sec_app}

	The polynomial-exponential basis $\{\Psi_n\}_{n \geq 1}$, as introduced in \cite{Klibanov:jiip2017} and extended for higher dimensions in \cite{NguyenLeNguyenKlibanov:2023}, plays the key role in our time reduction method. This basis is constructed in the following manner:
For every $n \geq 1$, we define a function $\phi(t) = t^{n-1} e^{t - T/2}$, where $t$ is in the interval $(0, T)$. It is clear that the set $\{\phi_n\}{n \geq 1}$ is a complete set in the space $L^2(0, T)$. When we apply the Gram-Schmidt orthogonalization process to $\{\phi_n\}{n \geq 1}$, we obtain an orthonormal basis $\{\Psi_n\}{n \geq 1}$ of $L^2(0, T)$. An important characteristic of this basis $\{\Psi_n\}{n \geq 1}$ in $L^2(0, T)$ is that for every $n$, the derivative $\Psi_n'(t)$ is non-zero. This particular feature is crucial in the application of the time-dimensional reduction method, which will be elaborated upon later in the paper.	
	
	In order to propose a numerical method for solving Problem \ref{isp}, it is permissible to approximate the wave function $u(\x, t)$ by truncating its Fourier series as follows:
	\begin{equation}
		u(\x, t) = \sum_{n = 1}^{\infty} u_n(\x) \Psi_n(t)
		\simeq \sum_{n = 1}^{N} u_n(\x) \Psi_n(t)
		\label{2.1}
	\end{equation}
	where $N$ is a cutoff number determined by the given data, see Section \ref{num} for our data-driven method to find $N$. 
	The Fourier coefficient $u_n$ in \eqref{2.1} is given by
	\begin{equation}
		u_n(\x) = \int_{0}^T u(\x, t) \Psi_n(t) dt, 
		\quad
		n \geq 1.
	\end{equation}
By substituting the approximation \eqref{2.1} into the governing hyperbolic equation in \eqref{main}, we obtain the following equation
\begin{equation}
	\sum_{n = 1}^{N} u_n(\x) \Psi_n''(t) = \sum_{n = 1}^{N} \Delta  u_n(\x) \Psi_n(t) + \mathcal F\Big(\sum_{n = 1}^{N} u_n(\x) \Psi_n(t)\Big)
	\label{2.3}
\end{equation}
for $(\x, t) \in \Omega \times (0, T).$
By multiplying both sides of \eqref{2.3} by $\Psi_m(t)$ for each $m \in \{1, \dots, N\}$ and integrating the resulting equation, we arrive at
\begin{equation}
	\sum_{n = 1}^{N} s_{mn} u_n(\x) = \Delta u_m(\x) + {\bf f}_mU,
	\quad
	m \in \{1, 2, \dots, N\}
	\label{2.4}
\end{equation}
for  $\x \in \Omega$,
where 
\begin{align*}
	U(\x) &=(u_1, u_2, \dots, u_N)^{\rm T},\\
	{\bf f}_mU(\x) &=  \mathcal F\Big(\sum_{n = 1}^{N} u_n(\x) \Psi_n(t)\Big) \Psi_m(t)dt,\\
	s_{mn} &= \int_0^T \Psi_n''(t) \Psi_m(t) dt.
\end{align*}
Defining ${\bf F}U = ({\bf f}_1 U,  \dots, {\bf f}_N U)^{\rm T}$ and $S = (s_{mn})_{m,n = 1}^N$, we can rewrite the system \eqref{2.4} as the form
\begin{equation}
	\Delta U(\x) - SU(\x) + {\bf F}U(\x) = 0 
	\quad 
	\mbox{for all } \x \in \Omega.
	\label{2.5}
\end{equation}

We next derive the boundary conditions for the vector $U$.
It follows from the Dirichlet boundary condition in \eqref{main} that for each $m \in \{1, \dots, N\}$ that for all $\x \in \partial \Omega,$
\begin{equation}
	u_m(\x) = 0.
	\label{2.6}
\end{equation}
We can compute the Neumann condition $\partial_{\nu} U(\x)$ from the given boundary data \eqref{data} as 
\begin{equation}
	\partial_{\nu} u_m(\x) = \int_{0}^T h(\x, t) \Psi_m(t) dt
	\label{2.7}
\end{equation}
for all $m \in \{1, \dots, N\}$, $\x \in \partial \Omega$.

In summary, denote by
\begin{equation}
	{\bf h}(\x) =  \Big(\ds\int_{0}^T h(\x, t) \Psi_m(t) dt\Big)_{m = 1}^N,
	\quad \x \in \partial \Omega
	\label{indirectdata}
\end{equation}
the indirect data that can be computed from the given data in \eqref{data}.
By \eqref{2.5}, \eqref{2.6} and \eqref{2.7}, we have derived a system of quasi-linear elliptic equations for the vector $U$
\begin{equation}
	\left\{
		\begin{array}{ll}
			\Delta U(\x) - SU(\x) + {\bf F}U(\x) = 0 &\x \in \Omega,\\
			U(\x) = 0 &\x \in \partial \Omega,\\
			\partial_{\nu} U(\x) = {\bf h}(\x) & \x \in \partial \Omega.
		\end{array}
	\right.
	\label{2.8}
\end{equation}

The inverse source problem under consideration is reduced to the problem solving \eqref{2.8}.
Having the computed solution $U^{\rm comp} = (u_1^{\rm comp}, \dots, u^{\rm comp}_N)$ to \eqref{2.8} in hand, due to \eqref{2.1}, we can find the computed source function $g^{\rm comp}$ by 
\begin{equation}
	g^{\rm comp}(\x) = u^{\rm comp}(\x, 0) =  \sum_{n = 1}^{N} u_n(\x) \Psi_n(0)
	\quad
	\mbox{for all } \x \in \Omega.
	\label{2.9}
\end{equation}
\begin{remark}
The technique we employ, which entails removing the time variable $t$ from equation \eqref{main} to obtain \eqref{2.8}, is known as the ``time-dimensional reduction method." 
The quasi-linear system \eqref{2.8} is called the ``time-reduction model" associated with \eqref{main}.
By eliminating the dimension associated with the time variable, we achieve a substantial reduction in computational costs. In fact, we solve problem \eqref{2.8} in $d$ dimensions, while the initial problem \eqref{main}, prior to time reduction, exists in $d+1$ dimensions, consisting of $d$ spatial dimensions along with the time variable.
\label{rm2.1}
\end{remark}

\begin{remark}[The significance of the polynomial-exponential basis]
	Here, we delve into the significance of the polynomial-exponential basis used in the time reduction method. A pertinent question might be why the basis $\{\Psi_n\}_{n \geq 1}$ was specifically chosen for the Fourier expansion in equation \eqref{2.1} over numerous other possibilities. The rationale for this choice lies in the limitations of more commonly used bases, like Legendre polynomials or trigonometric functions, which may not be appropriate for this particular application.
The complication with these conventional bases is that their initial function is a constant, which has zero derivative. This characteristic causes the Fourier coefficient $u_{1}(\x)$ to be excluded in the sums $\sum_{n = 1}^N u_{n}(\x) \Psi_n''(t)$ in equation \eqref{2.3}, adversely affecting accuracy. In contrast, the polynomial-exponential basis used in our study is specifically chosen for equation \eqref{2.3} because it fulfills the crucial requirement that $\Psi_n'$, for each $n \geq 1$, is non-zero.

The superior effectiveness of the polynomial-exponential basis has been validated in various research works, including \cite{LeNguyenNguyenPowell:JOSC2021, NguyenLeNguyenKlibanov:2023}. In \cite{LeNguyenNguyenPowell:JOSC2021}, we compared the polynomial-exponential basis with the traditional trigonometric basis in expanding wave fields for problems in photo-acoustic and thermo-acoustic tomography. The outcomes distinctly showed the polynomial-exponential basis's enhanced performance. Moreover, in \cite{NguyenLeNguyenKlibanov:2023}, this basis was employed in the Fourier expansion for computing derivatives of noise-corrupted data. The results revealed that the polynomial-exponential basis was more accurate compared to the trigonometric basis, particularly when term-by-term differentiation of the Fourier series was necessary to resolve ill-posed problems.

Thus, the polynomial-exponential basis emerges as the preferred choice when term-by-term differentiation of Fourier expansions is required, thanks to its beneficial properties and proven effectiveness.
\end{remark}

\begin{remark}
	Recall that upon solving the time-reduction model \eqref{2.8}, the solution to Problem \ref{isp} can be computed using \eqref{2.9}. As the time-reduction model \eqref{2.8} relies on the cutoff number $N$, studying the convergence of our method as $N$ tends to infinity poses significant challenges. However, addressing this question is beyond the scope of this paper, as our primary focus lies in the computational aspects.
\end{remark}

There are several methods to solve the time-reduction model \eqref{2.8}. We list some options below
\begin{enumerate}
	\item The least squares optimization technique: This approach is popular in scientific research, involving the minimization of a cost functional. One takes the global minimizer as the solution. For instance, consider the functional
	\[
	V \mapsto J(v) := \int_{\Omega} |\Delta V(\x) - SV(\x) + {\bf F}V(\x)|^2 d\x 
	+ \mbox{some regularization terms}
\]
subject to the boundary conditions $V = {\bf f}$ and $\partial_{\nu}V = {\bf h}$ on $\partial \Omega$. Finding the global minimizer of $J$ can be challenging due to the potential existence of multiple local minimizers. To obtain reliable results with this optimization method, it is typically necessary to start with a well-informed initial guess. 
	\item The convexification method:  Initially introduced in \cite{KlibanovIoussoupova:SMA1995}, this method allows for solving inverse problems without necessitating a good initial guess. Various versions of the convexification method have been developed and explored in subsequent papers \cite{KlibanovNik:ra2017, VoKlibanovNguyen:IP2020, Khoaelal:IPSE2021, KhoaKlibanovLoc:SIAMImaging2020, Klibanov:jiip2017, Klibanov:ip2020, KlibanovNguyenTran:JCP2022, LeLeNguyen:Arxiv2022, LeNguyen:JSC2022}.
	\item The Carleman contraction method \cite{Nguyen:AVM2023}  along with the recently developed Carleman-Newton method \cite{AbhishekLeNguyenKhan, LeNguyenTran:CAMWA2022}, can provide reliable solutions to \eqref{2.8} without requiring an initial guess. These methods offer faster convergence rates compared to the convexification method. In this paper, we generalize the Carleman contraction method in \cite{Nguyen:AVM2023} to solve \eqref{2.8}. This will be presented in Section \ref{sec_contraction}. 
\end{enumerate}

The key aspect shared by the convexification, Carleman contraction, and Carleman-Newton methods is the inclusion of Carleman weight functions within the procedure. The efficacy of these approaches is demonstrated using Carleman estimates.

\section{The Carleman contraction mapping}
\label{sec_contraction}

Let $p > \lceil d/2 \rceil + 2$ be an integer.
Define the functional space 
\begin{equation}
	H = \big\{\varphi \in H^p(\Omega)^N: \varphi|_{\partial \Omega} = 0\big\}.
\end{equation}
It is obvious that $H$ is a closed subspace of the Hilbert space $H^p(\Omega)^N.$
Let $\x_0$ be a point in $\mathbb{R}^d \setminus \overline \Omega$ such that $r(\x) = \vert\x - \x_0\vert > 1$ for all $\x \in \Omega$. 
For each $\epsilon > 0$, $\lambda > 0$ and $\beta > 0$, introduce the map
\begin{equation}
	\Phi_{\lambda, \beta, \epsilon}: H \to H,
	\quad
	W \mapsto \Phi_{\lambda, \beta, \epsilon}(W) =  \underset{V \in H}{\mbox{argmin}} J_{\lambda, \beta, \epsilon}^W(V)
	\label{4.3}
\end{equation}
where
\begin{equation}
	J_{\lambda, \beta, \epsilon}^W(V) 
	= \int_{\Omega}e^{2\lambda r^{-\beta}}|\Delta V - S V + {\bf F}(W)|^2d\x 
	+ \lambda^2 \int_{\partial \Omega} e^{2\lambda r^{-\beta}} |\partial_{\nu}V - {\bf h}|^2d\sigma(\x)
	+ \epsilon \|V\|_{H}^2
	\label{4.4}
\end{equation}
for all $V \in H.$
The existence and uniqueness of $J_{\lambda, \beta, \epsilon}$ is obvious. 
For each fix $W \in H$, $J_{\lambda, \beta, \epsilon}^W(V)$ is strictly convex in Hilbert space $H$.
In other words, the map $\Phi_{\lambda, \beta, \epsilon}$ is well-defined, see also \cite[Remark 3.1]{Nguyen:AVM2023}.
As mentioned in the last paragraph of Section \ref{sec_app}, the presence of the Carleman weight function $e^{2\lambda r^{-\beta}}$ in the integral in the right-hand side of \eqref{4.4} is the main point of our globally convergent method to compute a numerical solution to \eqref{2.8}.

Assume that $F$ is Lipschitz; i.e., there is a constant $M_F$ such that
\begin{equation}
	|F(\x, t, s_1, \p_1, r_1, l_1) - F(\x, t, s_2, \p_2, r_2, l_2)|
	\leq
	M_F (|s_1 - s_2| + |\p_1 - \p_2| + |r_1 - r_2| + |l_1 - l_2|)
	\label{Lipschitz}
\end{equation}
for all $\x \in \overline \Omega,$ $s_i \in \R$, $\p_i \in \R^d$, $r_i \in \R$, $l_i \in \R$, $i \in \{1, 2\}.$
Then, we can find a constant ${\bf M}$ depending on $M_F$, $K$, $\{\Psi_n\}_{n = 1}^N$, $N$ and $T$ such that 
\begin{equation}
	|{\bf F}(V_1) - {\bf F}(V_2)|^2
	\leq {\bf M}\big(|V_1 - V_2|^2 + |\nabla V_1 - \nabla V_2|^2\big)
	\label{LipschitzF}
\end{equation}
for all vector-valued functions $V_1$ and $V_2$ and for all $(\x, t) \in \overline \Omega \times [0, T].$
For $\lambda > 0$, $\beta > 0$, and $\varepsilon > 0$, define the norm
$\|\cdot\|_{\lambda, \beta, \epsilon}$ 
\begin{equation}
	\|U\|_{\lambda, \beta, \epsilon} = 
	\left(
		\int_{\Omega} e^{2\lambda r^{-\beta}} 
			(\lambda^2|U|^2 + |\nabla U|^2)d\x
		+
		\lambda\int_{\partial \Omega} e^{2\lambda r^{-\beta}} |\nabla U|^2 d\sigma(\x)	 
	+ \frac{\epsilon}{\lambda} \|U\|_{H}^2
	\right)^{1/2} 
	\label{4,2}
\end{equation}	
for all $U \in H$.
We have the theorems.

\begin{Theorem}
Assume \eqref{Lipschitz} and hence \eqref{LipschitzF} hold true.
Then, there is a constant $\beta_0$ depending only on ${\bf M}$, $d$, $\Omega,$ and $\x_0$ such that for all $\beta \geq \beta_0$, we have
\begin{equation}
		\|\Phi_{\lambda, \beta, \epsilon}(U) - \Phi_{\lambda, \beta, \epsilon}(V)\|_{\lambda, \beta, \epsilon}
		\leq \sqrt{\frac{C}{\lambda}} \|U - V\|_{\lambda, \beta, \epsilon}
		\label{4,1}
	\end{equation}
	for all $\lambda > \lambda_0$
where $\lambda_0 = \lambda_0({\bf M}, d, \Omega, d, \x_0, \beta)$ and $C = C({\bf M}, d, \Omega, d, \x_0, \beta)$ depending only on the listed parameters.
Consequently, choosing $\lambda > \lambda_0$ sufficiently large, the map $\Phi_{\lambda, \beta, \epsilon}: H \to H$ is a contraction mapping with respect to the norm $\|\cdot\|_{\lambda, \beta, \epsilon}$.
\label{thm_contract}
\end{Theorem}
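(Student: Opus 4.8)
The plan is to reduce the contraction estimate \eqref{4,1} to a Carleman estimate for the elliptic operator $\Delta - S$ applied to the difference of the two minimizers, exploiting the variational characterization \eqref{4.3}--\eqref{4.4} and the Lipschitz bound \eqref{LipschitzF}. First I would set $U_1 = \Phi_{\lambda,\beta,\epsilon}(U)$, $U_2 = \Phi_{\lambda,\beta,\epsilon}(V)$, and $h = U_1 - U_2 \in H$. Writing the Euler--Lagrange (variational) identities satisfied by $U_1$ and $U_2$ and subtracting them, I would test the resulting identity against $h$ itself. This produces, on the left, the quadratic form $\int_\Omega e^{2\lambda r^{-\beta}}|\Delta h - S h|^2 d\x + \lambda^2 \int_{\partial\Omega} e^{2\lambda r^{-\beta}}|\partial_\nu h|^2 d\sigma + \epsilon\|h\|_H^2$, and on the right a cross term involving $\int_\Omega e^{2\lambda r^{-\beta}} (\Delta h - S h)\cdot({\bf F}(U) - {\bf F}(V))\, d\x$. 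By Cauchy--Schwarz and the elementary inequality $2ab \le a^2 + b^2$, the right-hand side is controlled by $\tfrac12 \int_\Omega e^{2\lambda r^{-\beta}}|\Delta h - S h|^2 d\x + \tfrac12 \int_\Omega e^{2\lambda r^{-\beta}}|{\bf F}(U)-{\bf F}(V)|^2 d\x$; absorbing the first term on the left and applying \eqref{LipschitzF} yields
\[
	\int_{\Omega} e^{2\lambda r^{-\beta}}|\Delta h - S h|^2 d\x
	+ \lambda^2 \int_{\partial\Omega} e^{2\lambda r^{-\beta}}|\partial_\nu h|^2 d\sigma
	\;\lesssim\; {\bf M}\int_{\Omega} e^{2\lambda r^{-\beta}}\big(|U-V|^2 + |\nabla U - \nabla V|^2\big)\, d\x.
\]

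The heart of the argument is then a Carleman estimate with weight $e^{2\lambda r^{-\beta}}$, $r(\x)=|\x-\x_0|$, for the operator $\Delta$ (the $-Sh$ term being a lower-order perturbation of size $\|S\|$ that is absorbed for $\lambda$ large). Such an estimate — in the form used throughout the convexification literature, e.g. \cite{Nguyen:AVM2023} — gives a constant $\beta_0 = \beta_0(d,\Omega,\x_0)$ and for $\beta \ge \beta_0$, $\lambda \ge \lambda_0$, a lower bound of the shape
\[
	\int_\Omega e^{2\lambda r^{-\beta}}|\Delta h|^2 d\x + \lambda^2 \int_{\partial\Omega} e^{2\lambda r^{-\beta}}|\partial_\nu h|^2 d\sigma
	\;\ge\; C\Big(\lambda \int_\Omega e^{2\lambda r^{-\beta}}\big(\lambda^2 |h|^2 + |\nabla h|^2\big) d\x + \lambda^2 \int_{\partial\Omega} e^{2\lambda r^{-\beta}}|\nabla h|^2 d\sigma\Big)
\]
for $h \in H$ (the homogeneous Dirichlet condition on $\partial\Omega$ is exactly what makes the boundary gradient term reduce to the normal derivative and eliminates the usual boundary remainders). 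Here the power weight $r^{-\beta}$ with $\beta$ large is precisely what forces the pseudoconvexity condition needed for the Carleman inequality; getting the dependence of constants right — $\beta_0$ independent of $\lambda$, and the gain of a factor $\lambda$ — is the main technical obstacle, and I expect to either cite the version in \cite{Nguyen:AVM2023} directly or reproduce its short proof via the standard conjugation-and-integration-by-parts computation.

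Combining the two displays: the left side of the second display is, up to the factor $C$, at least $\|h\|_{\lambda,\beta,\epsilon}^2$ minus the $\epsilon$-term, while the first display bounds it above by ${\bf M} \int_\Omega e^{2\lambda r^{-\beta}}(|U-V|^2 + |\nabla U-\nabla V|^2) d\x$. To recover the full norm \eqref{4,2} on the right I would carry the $\epsilon\|h\|_H^2$ term through the variational identity as well: it appears on the left with coefficient $1$ and, after dividing the whole inequality by $\lambda$ to match the normalization of $\|\cdot\|_{\lambda,\beta,\epsilon}$, contributes the $\tfrac{\epsilon}{\lambda}\|h\|_H^2$ summand, while on the right-hand side the crude bound $\int_\Omega e^{2\lambda r^{-\beta}}(|U-V|^2+|\nabla U-\nabla V|^2)d\x \le \|U-V\|_{\lambda,\beta,\epsilon}^2$ holds since $\lambda > 1$. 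This gives $\|h\|_{\lambda,\beta,\epsilon}^2 \le \tfrac{C}{\lambda}\|U-V\|_{\lambda,\beta,\epsilon}^2$, i.e. \eqref{4,1}; taking $\lambda > \max(\lambda_0, C)$ makes the Lipschitz constant $\sqrt{C/\lambda} < 1$, so $\Phi_{\lambda,\beta,\epsilon}$ is a contraction on $(H, \|\cdot\|_{\lambda,\beta,\epsilon})$. I would finish by noting that $\|\cdot\|_{\lambda,\beta,\epsilon}$ is equivalent (with $\lambda,\beta,\epsilon$-dependent constants) to the $H^p(\Omega)^N$ norm, so $H$ is complete under it and Banach's fixed point theorem applies.
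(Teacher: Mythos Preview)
Your proposal is correct and follows essentially the same route as the paper's own proof: write the Euler--Lagrange identities for the two minimizers, subtract, test against their difference, absorb the cross term by Cauchy--Schwarz, split $|\Delta h - Sh|^2$ to isolate $|\Delta h|^2$, apply the Carleman estimate with weight $e^{2\lambda r^{-\beta}}$ (the paper states it as Lemma~\ref{lemCarpointwise} and its Corollary rather than citing \cite{Nguyen:AVM2023}), absorb the $|Sh|^2$ and boundary terms for large $\lambda$, invoke \eqref{LipschitzF}, and divide by $\lambda$ to match the norm \eqref{4,2}. The only addition in your sketch is the explicit remark on completeness of $(H,\|\cdot\|_{\lambda,\beta,\epsilon})$, which the paper leaves implicit.
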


The following result is a point-wise Carleman estimate, which plays a pivotal role in the proof of Theorem \ref{thm_contract} and the convergence of our method with respect to the noise in the next section. 
\begin{Lemma}[See Theorem 3.1 in \cite{LeLeNguyen:Arxiv2022}]
    Let $\lambda > 0$ and $v \in C^2(\overline \Omega).$
    Then, there exists a positive constant $\beta_0$ depending only on $\x_0,$ $d$ and $\Omega$ 
    such that for all $\beta \geq \beta_0$ and $\lambda \geq \lambda_0 = 2R^\beta$, where $R = \max_{\x \in \overline \Omega}\{|\x - \x_0|\}$, we have
    \begin{equation}
        r^{\beta + 2} e^{2\lambda r^{-\beta}} |\Delta v|^2  
       % \\
    \geq C\Big[
        \Div (V) + \lambda^3\beta^4e^{2\lambda r^{-\beta}} r^{-2\beta - 2} |v|^2
        + \lambda \beta e^{2\lambda r^{-\beta}} |\nabla v|^2 
    \Big].
    \label{CarEst}
    \end{equation}
    Here, $V$ is  a vector-valued function satisfying
    \begin{equation}
        |V| \leq Ce^{2\lambda r^{-\beta}}(\lambda^3 \beta^3 r^{-2\beta - 2}|v|^2 + \lambda \beta |\nabla v|^2)
        \label{divU}
    \end{equation} and $C$ is a constant depending only on $\x_0,$ $\Omega$, and $d$. 
    \label{lemCarpointwise}
\end{Lemma}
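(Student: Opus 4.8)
The plan is to establish \eqref{CarEst} by the classical Carleman technique of conjugating the Laplacian by the weight and isolating a positive bulk form modulo a divergence. Set $\psi = r^{-\beta}$ and introduce the substitution $w = e^{\lambda\psi}v$, so that $v = e^{-\lambda\psi}w$ and $w \in C^2(\overline\Omega)$. A direct Leibniz computation gives
\[
e^{\lambda\psi}\Delta v = \Delta w - 2\lambda\nabla\psi\cdot\nabla w + \big(\lambda^2|\nabla\psi|^2 - \lambda\Delta\psi\big)w =: Pw .
\]
I would split $Pw = P_+w + P_-w$ into its formally self-adjoint (even in $\lambda$) and skew-adjoint (odd in $\lambda$) parts, $P_+w = \Delta w + \lambda^2|\nabla\psi|^2 w$ and $P_-w = -2\lambda\nabla\psi\cdot\nabla w - \lambda(\Delta\psi)w$. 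Since $e^{2\lambda\psi}|\Delta v|^2 = |Pw|^2 \ge 2(P_+w)(P_-w)$, it suffices to bound $2(P_+w)(P_-w)$ from below, after multiplication by the prefactor $r^{\beta+2}$, by the right-hand side of \eqref{CarEst} expressed in $w$.

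Second, I would expand the cross product $2(P_+w)(P_-w)$ into its four constituent products and rewrite each one, using the pointwise identities $\Delta w\,\partial_j w = \sum_i\partial_i(\partial_i w\,\partial_j w) - \tfrac12\partial_j|\nabla w|^2$, $w\Delta w = \Div(w\nabla w) - |\nabla w|^2$, and $w(\nabla\psi\cdot\nabla w) = \tfrac12\nabla\psi\cdot\nabla(w^2)$, as a divergence $\Div(\tilde V)$ plus interior terms. The interior terms organize into a zeroth-order part proportional to $w^2$ and a first-order quadratic form in $\nabla w$ whose coefficients are built from $\nabla\psi$, $\Delta\psi$, and the Hessian $D^2\psi$. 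For $\psi = r^{-\beta}$ one computes $\nabla\psi = -\beta r^{-\beta-2}(\x-\x_0)$, $|\nabla\psi|^2 = \beta^2 r^{-2\beta-2}$, $\Delta\psi = \beta(\beta+2-d)r^{-\beta-2}$, and a radial Hessian with radial eigenvalue $\beta(\beta+1)r^{-\beta-2}$ and tangential eigenvalues $-\beta r^{-\beta-2}$. Tracking the highest powers of $\lambda$ and $\beta$, the cubic products yield the dominant positive bulk term of order $\lambda^3\beta^4 r^{-3\beta-4}w^2$, while the products linear in $\lambda$ yield the gradient form.

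Third — and this is the main obstacle — I must secure positivity of the full gradient quadratic form. The radial Hessian eigenvalue is of order $\beta^2$ and positive, but the tangential eigenvalues are of order $\beta$ and negative, so $\psi = r^{-\beta}$ is only radially convex and the bare Hessian contribution $4\lambda D^2\psi(\nabla w,\nabla w)$ is indefinite in the tangential directions. The mechanism for recovering $+\lambda\beta|\nabla w|^2$ is precisely the prefactor $r^{\beta+2}$: when it is carried inside the divergence, the resulting correction $-\nabla(r^{\beta+2})\cdot\tilde V$, with $\nabla(r^{\beta+2}) = (\beta+2)r^{\beta}(\x-\x_0)$, produces additional gradient-squared contributions of order $\lambda\beta^2$ that dominate the $O(\lambda\beta)$ tangential concavity. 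I would therefore first multiply the pointwise identity by $r^{\beta+2}$, then re-extract the divergence, and choose $\beta \ge \beta_0(\x_0,d,\Omega)$ large enough that the net gradient form is bounded below by $c\,\lambda\beta\,r^{-\beta-2}\cdot r^{\beta+2}|\nabla w|^2 = c\,\lambda\beta|\nabla w|^2$; the lower-order products in $w^2$ are simultaneously absorbed into the dominant $\lambda^3\beta^4$ term by requiring $\lambda \ge \lambda_0 = 2R^\beta$, which on $\overline\Omega$ makes the cubic-in-$\lambda$ term control every quadratic-in-$\lambda$ remainder.

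Finally, I would substitute back $w = e^{\lambda\psi}v$, so that $|\nabla w|^2 = e^{2\lambda\psi}|\nabla v + \lambda v\nabla\psi|^2$ recombines, up to divergences folded into $V$, into $e^{2\lambda\psi}(|\nabla v|^2 + \lambda^2\beta^2 r^{-2\beta-2}|v|^2)$, while $w^2 = e^{2\lambda\psi}v^2$. After the multiplication by $r^{\beta+2}$, the dominant $w^2$ term becomes exactly $\lambda^3\beta^4 e^{2\lambda r^{-\beta}}r^{-2\beta-2}|v|^2$ and the gradient term becomes $\lambda\beta e^{2\lambda r^{-\beta}}|\nabla v|^2$, which is \eqref{CarEst}. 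The vector field $V$ is the accumulated divergence flux; reading off its constituent terms, each of the form $e^{2\lambda\psi}$ times products of $\nabla\psi$, $w$, and $\nabla w$, and using $|\nabla\psi| = \beta r^{-\beta-1}$, gives the bound \eqref{divU}, in which the factors $\lambda^3\beta^3 r^{-2\beta-2}|v|^2$ and $\lambda\beta|\nabla v|^2$ arise respectively from the cubic zeroth-order flux and the linear gradient flux.
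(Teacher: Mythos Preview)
The paper does not actually prove this lemma: immediately after its statement the authors write ``We refer the reader to \cite[Theorem~3.1]{LeLeNguyen:Arxiv2022} for the proof of this Carleman estimate,'' and move on to its corollary. There is therefore no in-paper argument to compare your proposal against.

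That said, your sketch follows the standard conjugation route used in proofs of pointwise Carleman estimates of this type (and is in the spirit of the cited reference). The essential steps---the substitution $w=e^{\lambda\psi}v$, the even/odd splitting $P_\pm$, the reduction to $2(P_+w)(P_-w)$, and the organization of the cross terms into a divergence plus bulk---are correct, and your computation of $\nabla\psi$, $|\nabla\psi|^2$, $\Delta\psi$, and the Hessian eigenvalues for $\psi=r^{-\beta}$ is accurate. In particular, you correctly identify the one genuine difficulty: since the tangential Hessian eigenvalues of $r^{-\beta}$ are $-\beta r^{-\beta-2}<0$, the bare term $4\lambda D^2\psi(\nabla w,\nabla w)$ is indefinite, and the prefactor $r^{\beta+2}$ is what rescues positivity. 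Carrying out your ``bring $r^{\beta+2}$ inside the divergence'' step explicitly, the correction $-\nabla(r^{\beta+2})\cdot\tilde V$ contributes $+2\lambda\beta(\beta+2)|\nabla w|^2-4\lambda\beta(\beta+2)(\partial_r w)^2$, which combines with $r^{\beta+2}\cdot 4\lambda D^2\psi(\nabla w,\nabla w)$ to give exactly $2\lambda\beta^2|\nabla w|^2$, isotropically positive as you claim. The remaining mixed $w\nabla w$ corrections (from the $\Delta\psi$ flux and from $\nabla(\Delta\psi)$) are of order $\lambda\beta^3$ and are absorbed, via the Cauchy--Schwarz splitting you indicate, using $\lambda\ge\lambda_0$ for the $w^2$ side.

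One point to tidy in a full write-up: when you invoke $|Pw|^2\ge 2(P_+w)(P_-w)$ you are discarding $|P_+w|^2+|P_-w|^2$, which is harmless for the lower bound but means the flux $V$ you assemble must come only from the cross term; make sure the bookkeeping for \eqref{divU} tracks the powers of $r$ after the $r^{\beta+2}$ multiplication and uses $1<r\le R$ on $\overline\Omega$ to land on the stated exponents.
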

We refer the reader to \cite[Theorem 3.1]{LeLeNguyen:Arxiv2022} for the proof of this Carleman estimate.
A direct consequence of \eqref{CarEst} is as follows.
\begin{Corollary}
Let $v \in C^2(\overline \Omega)$ with $v|_{\partial \Omega} = 0.$ We have
\begin{equation}
	\int_{\Omega} e^{2\lambda r^{-\beta}} |\Delta v|^2 d\x
	\geq
	-C \lambda \int_{\partial\Omega}e^{2\lambda r^{-\beta}}   |\nabla v|^2 d\sigma(\x)
	\\
	+C \int_{\Omega} e^{2\lambda r^{-\beta}} \big[
		 \lambda^3  |v|^2
        + \lambda   |\nabla v|^2
		 \big]d\x
		 \label{4.3333}
	\end{equation} 
	for $\lambda, \beta$ and $C$ as in Lemma \ref{lemCarpointwise}.
\end{Corollary}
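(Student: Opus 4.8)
The goal is to deduce the Corollary (inequality \eqref{4.3333}) from the pointwise Carleman estimate \eqref{CarEst}–\eqref{divU} of Lemma \ref{lemCarpointwise}. The plan is to integrate \eqref{CarEst} over $\Omega$ and control the resulting boundary term coming from the divergence, using the hypothesis $v|_{\partial\Omega}=0$.

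\medskip

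\textbf{Step 1: Integrate the pointwise estimate.} Apply \eqref{CarEst} and integrate both sides over $\Omega$. On the left one gets $\int_\Omega r^{\beta+2} e^{2\lambda r^{-\beta}}|\Delta v|^2\,d\x$; since $r(\x)=|\x-\x_0|$ is bounded above by $R$ and below by $1$ on $\overline\Omega$, the weight $r^{\beta+2}$ is bounded by $R^{\beta+2}$, a constant depending only on $\beta$, $\x_0$ and $\Omega$. Hence the left side is $\leq C\int_\Omega e^{2\lambda r^{-\beta}}|\Delta v|^2\,d\x$ for a new constant $C$. On the right, the volume terms $\lambda^3\beta^4 e^{2\lambda r^{-\beta}}r^{-2\beta-2}|v|^2$ and $\lambda\beta e^{2\lambda r^{-\beta}}|\nabla v|^2$ are, after using $r^{-2\beta-2}\geq R^{-2\beta-2}$ and absorbing the fixed powers of $\beta$ into the constant (recall $\beta$ is now a fixed parameter $\geq\beta_0$), bounded below by $C(\lambda^3 e^{2\lambda r^{-\beta}}|v|^2+\lambda e^{2\lambda r^{-\beta}}|\nabla v|^2)$.

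\medskip

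\textbf{Step 2: Handle the divergence term via the divergence theorem.} The term $\int_\Omega \Div(V)\,d\x$ equals $\int_{\partial\Omega} V\cdot\nu\,d\sigma(\x)$ by the divergence theorem. Using the bound \eqref{divU} on $|V|$ and the fact that $v=0$ on $\partial\Omega$, the first summand $\lambda^3\beta^3 r^{-2\beta-2}|v|^2$ in \eqref{divU} vanishes on $\partial\Omega$, leaving $|V|\leq C\lambda\beta e^{2\lambda r^{-\beta}}|\nabla v|^2$ there. Therefore $\int_\Omega\Div(V)\,d\x \geq -C\lambda\int_{\partial\Omega}e^{2\lambda r^{-\beta}}|\nabla v|^2\,d\sigma(\x)$, again absorbing $\beta$ into $C$. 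Combining with Step 1 and rearranging gives precisely \eqref{4.3333}. I would also remark that the hypothesis $p>\lceil d/2\rceil+2$ (or the $C^2$ regularity assumed) ensures the trace $\nabla v|_{\partial\Omega}$ and all the integrals involved make sense.

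\medskip

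\textbf{Main obstacle.} There is no deep obstacle here — the Corollary is essentially a packaging of the pointwise estimate. The only point requiring a little care is the use of $v|_{\partial\Omega}=0$ to kill the $|v|^2$-contribution in the boundary term arising from $\Div(V)$; without this the boundary integral would also contain an uncontrolled $\lambda^3\int_{\partial\Omega}e^{2\lambda r^{-\beta}}|v|^2$ term, which is exactly why the Corollary is stated for functions vanishing on $\partial\Omega$. A secondary bookkeeping issue is making sure every constant is tracked as depending only on $\x_0$, $\Omega$, $d$ (and $\beta$, now fixed), and that the powers of $\beta$ discarded in passing from \eqref{CarEst} to \eqref{4.3333} are genuinely harmless since $\beta\geq\beta_0$ is held fixed throughout.
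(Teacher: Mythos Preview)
Your proposal is correct and follows essentially the same route as the paper's proof: integrate the pointwise estimate \eqref{CarEst} over $\Omega$, convert $\int_\Omega \Div(V)\,d\x$ into a boundary integral via the divergence theorem, bound it using \eqref{divU}, use $v|_{\partial\Omega}=0$ to drop the $|v|^2$ boundary contribution, and absorb the bounded factors $r^{\beta+2}$, $r^{-2\beta-2}$ and the fixed powers of $\beta$ into the constant $C$. The only difference is that you spell out the bookkeeping of the $r$- and $\beta$-dependent factors a bit more explicitly than the paper does in its final ``which directly implies \eqref{4.3333}'' step.
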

\begin{proof}
	Integrating \eqref{CarEst} and using integrating by parts, we have
	\[
		\int_{\Omega}r^{\beta + 2} e^{2\lambda r^{-\beta}} |\Delta v|^2 
		\geq 
		 C\int_{\partial\Omega} V\cdot \nu d\x
		 + C \int_{\Omega}\big[
		 \lambda^3\beta^4e^{2\lambda r^{-\beta}} r^{-2\beta - 2} |v|^2
        + \lambda \beta e^{2\lambda r^{-\beta}} |\nabla v|^2
		 \big].
	\]
	By \eqref{divU}, we have
	\begin{multline*}
		\int_{\Omega}r^{\beta + 2} e^{2\lambda r^{-\beta}} |\Delta v|^2 
		\geq 
		 -C\int_{\partial\Omega}e^{2\lambda r^{-\beta}}(\lambda^3 \beta^3 r^{-2\beta - 2}|v|^2 + \lambda \beta |\nabla v|^2) d\sigma(\x)
		 \\
		 + C \int_{\Omega}\big[
		 \lambda^3\beta^4e^{2\lambda r^{-\beta}} r^{-2\beta - 2} |v|^2
        + \lambda \beta e^{2\lambda r^{-\beta}} |\nabla v|^2
		 \big].
	\end{multline*}
	Since $v|_{\partial \Omega} = 0$, we have
	\begin{multline*}
	\int_{\Omega}r^{\beta + 2} e^{2\lambda r^{-\beta}} |\Delta v|^2 d\x
	\geq
	-C\int_{\partial\Omega}e^{2\lambda r^{-\beta}} \lambda \beta |\nabla v|^2 d\sigma(\x)
	\\
	+C \int_{\Omega}\big[
		 \lambda^3\beta^4e^{2\lambda r^{-\beta}} r^{-2\beta - 2} |v|^2
        + \lambda \beta e^{2\lambda r^{-\beta}} |\nabla v|^2
		 \big],
	\end{multline*}
	which directly implies \eqref{4.3333}. 
	 It should be noted that in the above argument, the constant $C$ is allowed to depend on $\beta$, $\Omega$, and $\x_0$.
\end{proof}

 \begin{proof}[Proof of Theorem \ref{thm_contract}]
 	Define $U_1 = \Phi_{\lambda, \beta, \epsilon}(U).$
	By the definition of  $\Phi_{\lambda, \beta, \epsilon}$ in \eqref{4.3}, we have $U_1$ is the minimizer of $J_{\lambda, \beta, \epsilon}^{U}$ in $H$ where $J_{\lambda, \beta, \epsilon}^{U}$ is defined in \eqref{4.4} with $U$ replacing $W$.
	By the variational principle, for all $\varphi \in H,$ we have
\begin{multline}
	\langle
		e^{2\lambda r^{-\beta}} (\Delta U_{1} - S U_{1} + {\bf F}(U)),
		\Delta \varphi - S \varphi
	\rangle_{L^2(\Omega)^N}
		\\
	+ \lambda^2 
	\langle
		e^{2\lambda r^{-\beta}} (\partial_{\nu}U_{1} - {\bf h}), \partial_{\nu} \varphi		
	\rangle_{L^2(\partial\Omega)^N}
	+\epsilon
	\langle
		 U_{1}, \varphi		
	\rangle_{H} = 0.
	\label{4,3}
\end{multline}
 Similarly, set $V_1 = \Phi_{\lambda, \beta, \epsilon}(V)$. We have
 \begin{multline}
	\langle
		e^{2\lambda r^{-\beta}} (\Delta V_{1} - S V_{1} + {\bf F}(V)),
		\Delta \varphi - S \varphi
	\rangle_{L^2(\Omega)^N}
		\\
	+ \lambda^2 
	\langle
		e^{2\lambda r^{-\beta}} (\partial_{\nu}V_{1} - {\bf h}), \partial_{\nu} \varphi		
	\rangle_{L^2(\partial\Omega)^N}
	+\epsilon
	\langle
		 V_{1}, \varphi		
	\rangle_{H} = 0
	\label{4,4}
\end{multline}
for all $\varphi \in H.$
Subtracting \eqref{4,4} from \eqref{4,3}  gives
 \begin{multline}
	\langle
		e^{2\lambda r^{-\beta}} (\Delta (U_1 - V_{1}) - S (U_1 - V_{1}) + {\bf F}(U) - {\bf F}(V)),
		\Delta \varphi - S \varphi
	\rangle_{L^2(\Omega)^N}
		\\
	+ \lambda^2 
	\langle
		e^{2\lambda r^{-\beta}} \partial_{\nu}(U_{1} - V_1), \partial_{\nu} \varphi		
	\rangle_{L^2(\partial\Omega)^N}
	+\epsilon
	\langle
		 U_1 - V_{1}, \varphi		
	\rangle_{H} = 0
	\label{4,5}
\end{multline}
for all $\varphi \in H.$
In particular, using $\varphi = U_1 - V_1$ in \eqref{4,5} gives
\begin{align*}
	\int_{\Omega} e^{2\lambda r^{-\beta}} |\Delta \varphi - S\varphi|^2 d\x
	+ &\lambda^2 \int_{\partial \Omega} e^{2\lambda r^{-\beta}} |\partial_{\nu}\varphi|^2d\sigma(\x)
	+ \epsilon \|\varphi\|_H^2 
	\\
	&= \int_{\Omega} e^{2\lambda r^{-\beta}} (\Delta \varphi - S\varphi) ({\bf F}(U) - {\bf F}(V))d\x 
	\\
	&\leq \frac{1}{2} \int_{\Omega} e^{2\lambda r^{-\beta}} |\Delta \varphi - S\varphi|^2 d\x +  \frac{1}{2} \int_{\Omega} e^{2\lambda r^{-\beta}} |{\bf F}(U) - {\bf F}(V)|^2d\x.
\end{align*}
Hence,
\begin{equation}
	\int_{\Omega} e^{2\lambda r^{-\beta}} |\Delta \varphi - S\varphi|^2 d\x
	+\lambda^2 \int_{\partial \Omega} e^{2\lambda r^{-\beta}} |\partial_{\nu}\varphi|^2d\sigma(\x)
	+ \epsilon \|\varphi\|_H^2 
	\leq C\int_{\Omega} e^{2\lambda r^{-\beta}} |{\bf F}(U) - {\bf F}(V)|^2d\x.
	\label{3.1414}
\end{equation}
It follows from \eqref{3.1414} and the inequality $(a - b)^2 \leq \frac{1}{2}a^2 - b^2$ that
\begin{multline}
	\int_{\Omega} e^{2\lambda r^{-\beta}} |\Delta \varphi|^2 d\x
	+ \lambda^2 
	\int_{\partial \Omega}  e^{2\lambda r^{-\beta}} |\partial \varphi|^2 d\sigma(\x)
		+\epsilon
	\|
		 \varphi		
	\|_{H}^2  
	\\
	\leq
	C\Big( \int_{\Omega}e^{2\lambda r^{-\beta}} |S \varphi|^2d\x
	+
	\int_{\Omega}e^{2\lambda r^{-\beta}} |{\bf F}(U) - {\bf F}(V)|^2d\x
	\Big).
	\label{4,6}
\end{multline}
Combining Carleman estimate \eqref{4.3333} for $\varphi$ and \eqref{4,6}, we have
\begin{multline}
-C \lambda \int_{\partial\Omega}e^{2\lambda r^{-\beta}}   |\nabla \varphi|^2 d\sigma(\x)
	+C \int_{\Omega} e^{2\lambda r^{-\beta}} \big[
		 \lambda^3  |\varphi|^2
        + \lambda   |\nabla \varphi|^2
		 \big]d\x
	+ \lambda^2 
	\int_{\partial \Omega}  e^{2\lambda r^{-\beta}} |\partial \varphi|^2 d\sigma(\x)
	\\
		+\epsilon
	\|
		 \varphi		
	\|_{H}^2  
	\leq
	C\Big( \int_{\Omega}e^{2\lambda r^{-\beta}} |S \varphi|^2d\x
	+
	\int_{\Omega}e^{2\lambda r^{-\beta}} |{\bf F}(U) - {\bf F}(V)|^2d\x
	\Big).
	\label{4,7}
\end{multline}
Since $\varphi|_{\partial \Omega} = 0$, $|\nabla \varphi| = |\partial_{\nu} \varphi|$ on $\partial \Omega.$
Since $\lambda$ is large, the third integral on the left-hand side of equation \eqref{4,7} prevails over the first one. Moreover, the second integral on the left-hand side of equation \eqref{4,7} dominates the first integral on the right-hand side. Thus, we can deduce the following estimate
\begin{multline}
	 \int_{\Omega} e^{2\lambda r^{-\beta}} \big[
		 \lambda^3  |\varphi|^2
        + \lambda   |\nabla \varphi|^2
		 \big]d\x
	+ \lambda^2 
	\int_{\partial \Omega}  e^{2\lambda r^{-\beta}} |\partial \varphi|^2 d\sigma(\x)
		+\epsilon
	\|
		 \varphi		
	\|_{H}^2  
	\\
	\leq
	C
	\int_{\Omega}e^{2\lambda r^{-\beta}} |{\bf F}(U) - {\bf F}(V)|^2d\x.
	\label{4,8}
\end{multline}
Recalling \eqref{LipschitzF}, we have 
\begin{align*}
	 \lambda 
	 \Big(
	 	 \int_{\Omega} e^{2\lambda r^{-\beta}}& \big[
		 \lambda^2  |\varphi|^2
        +    |\nabla \varphi|^2
		 \big]d\x
		 +
	\lambda \int_{\partial \Omega}  e^{2\lambda r^{-\beta}} |\partial \varphi|^2 d\sigma(\x)
	+ \frac{\epsilon}{\lambda}
	\|
		 \varphi		
	\|_{H}^2  
	 \Big)		
	\\
	&\leq
	C
	\int_{\Omega}e^{2\lambda r^{-\beta}} |U - V|^2 + |\nabla (U - V)|^2d\x
	\\
	&
	\leq
	C\Big(
	\int_{\Omega}e^{2\lambda r^{-\beta}} (\lambda^2|U - V|^2 + |\nabla (U - V)|^2)d\x
	+
	\lambda \int_{\partial \Omega}  e^{2\lambda r^{-\beta}} |\partial (U - V)|^2 d\sigma(\x)
	\\
	&\hspace{11cm}+ \frac{\epsilon}{\lambda} \|U - V\|_H^2
	\Big).
\end{align*}
The desired estimate \eqref{4,1} follows.
 \end{proof}

\begin{remark}
 The proof of Theorem \ref{thm_contract} is similar to that of Theorem 3.1 in \cite{Nguyen:AVM2023}. However, the key distinction lies in the inclusion of the boundary integral within the norm $\|\cdot\|_{\lambda, \beta, \epsilon}$, resulting the convergence with respect to a stronger norm. This modification allows for the investigation of noise analysis without the need to impose a technical condition that the noise is the restriction of a smooth function, see \cite[Section 4]{Nguyen:AVM2023}. 
\end{remark}

 Define the sequence
	\begin{equation}
	\left\{
		\begin{array}{ll}
		U_0 \in H \mbox{ be chosen arbitrarily},\\
		U_{n + 1} = \Phi_{\lambda, \beta, \epsilon}(U_n)
		& n \geq 0
		\end{array}
	\right.
	\label{3.9}
	\end{equation}
A direct consequence of Theorem \ref{thm_contract} is that the sequence $\{U_n\}_{n \geq 1}$ converges to a vector-valued function $\overline U$ in H with respect to the norm $\|\cdot\|_{\lambda, \beta, \epsilon}.$
In the next section, we will show that $\overline U$ is a good approximation of the solution to \eqref{2.8}.

\section{The convergence of the Carleman contraction principle}\label{sec4}

We assume that the observed data $h = \partial_\nu u(\x, t)$, $(\x, t) \in \partial \Omega \times (0, T)$ contains noise.
As a result, the indirect data for \eqref{2.8}, the vector ${\bf h}$ defined in \eqref{indirectdata} is not accurate.
Denote by ${\bf h}^*$ the exact version of the vector  ${\bf h}$. 
Assume that problem \eqref{2.8} with ${\bf h}$ being replaced with ${\bf h}^*$ has a unique solution, denoted by $U^*$.

We have the theorem.

\begin{Theorem}
	Assume that \eqref{Lipschitz} and hence \eqref{LipschitzF} hold true. 
	Fix $\beta > \beta_0$ and $\lambda \geq \lambda_0$ sufficiently large where $\beta_0$ and $\lambda_0$ are as in Theorem \ref{thm_contract} such that $\Phi_{\lambda, \beta, \epsilon}$ is a contraction mapping for all $\epsilon > 0.$
	Let $\overline U$ be the fixed point of $\Phi_{\lambda, \beta, \epsilon}$. 
	We have
	\begin{multline}
\int_{\Omega}e^{2\lambda r^{-\beta}}  \big[
		 \lambda^3 |\overline U - U^*|^2
        + \lambda   |\nabla (\overline U - U^*)|^2
		 \big]d\x
	+ \lambda^2 \int_{\partial \Omega} e^{2\lambda r^{-\beta}} |\nabla (\overline U - U^*)|^2d\sigma(\x)
	\\
	+ \epsilon \|\overline U - U^*\|_H^2
	\leq 
	C\Big[
	\lambda^2 \int_{\partial \Omega}e^{2\lambda r^{-\beta}} |{\bf h} - {\bf h}^*|^2d\sigma(\x)
	+\epsilon
	\|
		  U^*		
	\|^2_H\Big].
	\label{4.21}
\end{multline}
where $C = C({\bf M}, \beta, \x_0, d, \Omega)$ is a constant. 
	\label{thm 4.1}
\end{Theorem}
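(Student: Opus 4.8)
The plan is to derive an equation for the error $\overline{U} - U^*$ and then feed it into the Carleman estimate, exactly mirroring the structure of the proof of Theorem~\ref{thm_contract}. First I would record the variational identity satisfied by $\overline{U}$: since $\overline{U}$ is the fixed point of $\Phi_{\lambda,\beta,\epsilon}$, it is the minimizer of $J_{\lambda,\beta,\epsilon}^{\overline U}$, so for all $\varphi \in H$,
\begin{multline*}
	\langle e^{2\lambda r^{-\beta}}(\Delta \overline{U} - S\overline{U} + {\bf F}(\overline{U})), \Delta \varphi - S\varphi\rangle_{L^2(\Omega)^N}
	+ \lambda^2 \langle e^{2\lambda r^{-\beta}}(\partial_\nu \overline{U} - {\bf h}), \partial_\nu \varphi\rangle_{L^2(\partial\Omega)^N}
	\\
	+ \epsilon \langle \overline{U}, \varphi\rangle_H = 0.
\end{multline*}
On the other hand, $U^*$ solves \eqref{2.8} with data ${\bf h}^*$, so $\Delta U^* - SU^* + {\bf F}(U^*) = 0$ in $\Omega$ and $\partial_\nu U^* = {\bf h}^*$ on $\partial\Omega$; multiplying by $e^{2\lambda r^{-\beta}}(\Delta\varphi - S\varphi)$ and integrating (and adding and subtracting $\epsilon\langle U^*,\varphi\rangle_H$) gives a comparison identity. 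Subtracting the two and setting $\psi = \overline{U} - U^*$, then taking $\varphi = \psi$, I would obtain
\begin{multline*}
	\int_\Omega e^{2\lambda r^{-\beta}}|\Delta\psi - S\psi|^2\,d\x + \lambda^2\int_{\partial\Omega}e^{2\lambda r^{-\beta}}|\partial_\nu\psi|^2\,d\sigma + \epsilon\|\psi\|_H^2
	\\
	= -\int_\Omega e^{2\lambda r^{-\beta}}(\Delta\psi - S\psi)({\bf F}(\overline{U}) - {\bf F}(U^*))\,d\x
	+ \lambda^2\int_{\partial\Omega}e^{2\lambda r^{-\beta}}({\bf h} - {\bf h}^*)\,\partial_\nu\psi\,d\sigma - \epsilon\langle U^*,\psi\rangle_H.
\end{multline*}

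Next I would estimate the three terms on the right by Cauchy--Schwarz with suitable weights: the ${\bf F}$-term is bounded by $\tfrac14\int_\Omega e^{2\lambda r^{-\beta}}|\Delta\psi - S\psi|^2 + C\int_\Omega e^{2\lambda r^{-\beta}}|{\bf F}(\overline U) - {\bf F}(U^*)|^2$, the boundary data term by $\tfrac{\lambda^2}{4}\int_{\partial\Omega}e^{2\lambda r^{-\beta}}|\partial_\nu\psi|^2 + C\lambda^2\int_{\partial\Omega}e^{2\lambda r^{-\beta}}|{\bf h}-{\bf h}^*|^2$, and the $\epsilon$-term by $\tfrac{\epsilon}{2}\|\psi\|_H^2 + \tfrac{\epsilon}{2}\|U^*\|_H^2$. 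Absorbing the first pieces into the left side and using $|\Delta\psi - S\psi|^2 \geq \tfrac12|\Delta\psi|^2 - |S\psi|^2$ yields a bound of the form
\begin{multline*}
	\int_\Omega e^{2\lambda r^{-\beta}}|\Delta\psi|^2\,d\x + \lambda^2\int_{\partial\Omega}e^{2\lambda r^{-\beta}}|\partial_\nu\psi|^2\,d\sigma + \epsilon\|\psi\|_H^2
	\\
	\leq C\Big(\int_\Omega e^{2\lambda r^{-\beta}}|S\psi|^2\,d\x + \int_\Omega e^{2\lambda r^{-\beta}}|{\bf F}(\overline U) - {\bf F}(U^*)|^2\,d\x + \lambda^2\int_{\partial\Omega}e^{2\lambda r^{-\beta}}|{\bf h}-{\bf h}^*|^2\,d\sigma + \epsilon\|U^*\|_H^2\Big).
\end{multline*}
Now apply Corollary's Carleman estimate \eqref{4.3333} to $\psi$ (legitimate since $\psi|_{\partial\Omega}=0$), which replaces $\int_\Omega e^{2\lambda r^{-\beta}}|\Delta\psi|^2$ on the left by $C\int_\Omega e^{2\lambda r^{-\beta}}(\lambda^3|\psi|^2 + \lambda|\nabla\psi|^2) - C\lambda\int_{\partial\Omega}e^{2\lambda r^{-\beta}}|\nabla\psi|^2$. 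On $\partial\Omega$ we have $|\nabla\psi| = |\partial_\nu\psi|$, so for $\lambda$ large the $\lambda^2$ boundary term absorbs the $C\lambda$ one; likewise the $\lambda^3|\psi|^2$ and $\lambda|\nabla\psi|^2$ volume terms absorb $C\int_\Omega e^{2\lambda r^{-\beta}}|S\psi|^2$ (since $S$ is a fixed matrix, $|S\psi|^2 \leq C|\psi|^2$) and, via \eqref{LipschitzF}, the term $\int_\Omega e^{2\lambda r^{-\beta}}|{\bf F}(\overline U)-{\bf F}(U^*)|^2 \leq {\bf M}\int_\Omega e^{2\lambda r^{-\beta}}(|\psi|^2 + |\nabla\psi|^2)$ — this last absorption works because $\lambda^3, \lambda \gg {\bf M}$ for $\lambda$ large. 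What survives is precisely \eqref{4.21}.

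The main obstacle — really the only delicate point — is bookkeeping the absorption inequalities: one must check that every ``bad'' term generated on the right-hand side (the $|S\psi|^2$ term, the Lipschitz ${\bf F}$ term, and the $C\lambda$ boundary term coming out of the Carleman estimate) carries a strictly smaller power of $\lambda$ or a constant factor that is beaten by the corresponding good term for $\lambda \geq \lambda_0$ sufficiently large, with $\lambda_0$ depending only on ${\bf M}, \beta, \x_0, d, \Omega$. This is the same mechanism used in the proof of Theorem~\ref{thm_contract}, so I would cross-reference that argument rather than repeat the routine estimates. One subtlety worth noting explicitly: unlike in Theorem~\ref{thm_contract}, here the right-hand side also contains the genuinely ``external'' contributions $\lambda^2\int_{\partial\Omega}e^{2\lambda r^{-\beta}}|{\bf h}-{\bf h}^*|^2$ and $\epsilon\|U^*\|_H^2$, which are not absorbed but simply carried through to the final estimate — they are the two error sources (measurement noise and Tikhonov regularization bias) quantified by the theorem.
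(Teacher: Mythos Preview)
Your proof is correct and uses the same ingredients as the paper's --- the variational identity for the minimizer, subtraction against the identity for $U^*$, Cauchy--Schwarz, the Carleman estimate \eqref{4.3333}, the Lipschitz bound \eqref{LipschitzF}, and absorption for large $\lambda$. The one structural difference is that the paper works with the approximating sequence $U_{n+1} = \Phi_{\lambda,\beta,\epsilon}(U_n)$, derives the analogue of your key inequality with $\varphi_{n+1} = U_{n+1} - U^*$ on the left and $U_n - U^*$ on the right (via ${\bf F}(U_n) - {\bf F}(U^*)$), and only then passes to the limit $n \to \infty$ to reach \eqref{4.21}; you instead invoke the fixed-point identity $\overline U = \Phi_{\lambda,\beta,\epsilon}(\overline U)$ directly, so both sides involve $\overline U - U^*$ from the outset. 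Your route is slightly more economical and avoids the limit step, while the paper's route stays closer to the iterative algorithm being analyzed; the underlying estimate is identical.
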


\begin{proof}[Proof of Theorem \ref{thm 4.1}]
	It is well-known that the fixed point $\overline U$ of $\Phi_{\lambda, \beta, \epsilon}$ is the limit of the sequence $\{U_n\}_{n \geq 0}$, defined in \eqref{3.9}, with respect to the norm $\|\cdot\|_{\lambda, \beta, \epsilon}$
	Fix $n \geq 1.$
	Since $U_{n + 1} = \Phi_{\lambda, \beta, \epsilon}(U_n)$ is the minimizer of $J^{U_n}_{\lambda, \beta, \epsilon}$ in $H$, 	
%	\begin{multline}
%	U_{n + 1} 
%	= \underset{V \in H}{\mbox{argmin}} 
%	 \int_{\Omega}e^{2\lambda r^{-\beta}}|\Delta V - S V + {\bf F}(U_n)|^2d\x 
%	\\
%	+ \lambda^2 \int_{\partial \Omega} e^{2\lambda r^{-\beta}} |\partial_{\nu}V - {\bf h}|^2d\sigma(\x)
%	+ \epsilon \|V\|_{H}^2.
%	\label{4.5}
%\end{multline}
by the variational principle, for all $\varphi \in H$, we have
\begin{multline}
	\langle
		e^{2\lambda r^{-\beta}} (\Delta U_{n + 1} - S U_{n + 1} + {\bf F}(U_n)),
		\Delta \varphi - S \varphi
	\rangle_{L^2(\Omega)^N}
		\\
	+ \lambda^2 
	\langle
		e^{2\lambda r^{-\beta}} (\partial_{\nu}U_{n + 1} - {\bf h}), \partial_{\nu} \varphi		
	\rangle_{L^2(\partial\Omega)^N}
	+\epsilon
	\langle
		 U_{n + 1}, \varphi		
	\rangle_{H} = 0
	\label{4.9}
\end{multline}
On the other hand, since $U^*$ is the solution to \eqref{2.8} with ${\bf h}^*$ replacing ${\bf h}$, for all $\varphi \in H^p(\Omega)^N$, we have
\begin{multline}
	\langle
		e^{2\lambda r^{-\beta}} (\Delta U^* - S U^* + {\bf F}(U^*)),
		\Delta \varphi - S \varphi
	\rangle_{L^2(\Omega)^N}
	\\
	+\lambda^2 
	\langle
		e^{2\lambda r^{-\beta}} (\partial_{\nu}U^* - {\bf h}^*), \partial_{\nu} \varphi		
	\rangle_{L^2(\partial\Omega)^N}
	+	\epsilon
	\langle
		 U^*, \varphi		
	\rangle_{H} 
	= 
	\epsilon
	\langle
		 U^*, \varphi		
	\rangle_{H}
	\label{4.10}
\end{multline}
for all $\varphi \in H.$
Subtracting \eqref{4.9} from \eqref{4.10}  and using
\begin{equation}
 	\varphi_{n + 1} = U_{n + 1} - U^* \in H
	\label{4.13}
\end{equation} 
as the test function $\varphi$ in \eqref{4.10}, we have
\begin{multline}
	\langle
		e^{2\lambda r^{-\beta}} (\Delta \varphi_{n + 1} - S \varphi_{n + 1} + {\bf F}(U_n) - {\bf F}(U^*)),
		\Delta \varphi_{n + 1} - S \varphi_{n + 1}
	\rangle_{L^2(\Omega)^N}
	\\
		+ \lambda^2
	\langle
		e^{2\lambda r^{-\beta}} \partial_{\nu}\varphi_{n + 1} , \partial_{\nu} \varphi_{n + 1}		
	\rangle_{L^2(\partial\Omega)^N}
	+\epsilon
	\langle
		 \varphi_{n + 1}, \varphi_{n + 1}		
	\rangle_{H}
	\\
	= 
	\lambda^2
	\langle
		e^{2\lambda r^{-\beta}} \partial_{\nu}\varphi_{n + 1}, \partial_{\nu} ({\bf h} - {\bf h}^*)		
	\rangle_{L^2(\partial\Omega)^N}
	-\epsilon
	\langle
		 U^*, \varphi_{n + 1}		
	\rangle_{H}.
	\label{4.12}
	\end{multline}
We can rewrite \eqref{4.12} as
\begin{multline}
	\int_{\Omega} e^{2\lambda r^{-\beta}}|\Delta \varphi_{n + 1} - S\varphi_{n + 1}|^2d\x 
	+ \lambda^2 \int_{\partial \Omega} e^{2\lambda r^{-\beta}} |\partial_{\nu}\varphi_{n + 1}|^2d\sigma(\x)
	+\epsilon \|\varphi_{n + 1}\|_H^2
	\\
	=
	-\int_{\Omega} e^{2\lambda r^{-\beta}}({\bf F}(U_{n}) - {\bf F}(U^*))(\Delta \varphi_{n + 1} - S\varphi_{n + 1}))^2d\x
	\\ 
	+ \lambda^2\int_{\partial \Omega} e^{2\lambda r^{-\beta}} \partial_{\nu} \varphi_{n + 1} \cdot ({\bf h} - {\bf h}^*) d\sigma(\x)
	-\epsilon
	\langle
		 U^*, \varphi_{n + 1}		
	\rangle_{H}.
	\label{4.14}
\end{multline}
Using the inequality $|ab| \leq \frac{1}{2}(a^2 + b^2),$ we deduce from \eqref{4.14} that
\begin{multline}
	\int_{\Omega} e^{2\lambda r^{-\beta}}|\Delta \varphi_{n + 1} - S\varphi_{n + 1}|^2d\x 
	+ \lambda^2 \int_{\partial \Omega} e^{2\lambda r^{-\beta}} |\partial_{\nu}\varphi_{n + 1}|^2d\sigma(\x)
	+\epsilon \|\varphi_{n + 1}\|_H^2
	\\
	\leq
	\int_{\Omega} e^{2\lambda r^{-\beta}}|{\bf F}(U_{n}) - {\bf F}(U^*)|^2d\x
	+ \lambda^2 \int_{\partial \Omega}e^{2\lambda r^{-\beta}} |{\bf h} - {\bf h}^*|^2d\sigma(\x)
	+\epsilon
	\|
		  U^*		
	\|^2_H.
	\label{4.15}
\end{multline}
Since $\varphi_{n + 1}|_{\partial \Omega} = 0$, $|\partial_{\nu} \varphi_{n + 1}| = |\nabla \varphi_{n + 1}|$ on $\partial \Omega$.
Also, due to the inequality $(a - b)^2 \geq \frac{1}{2} a^2 - b^2$,
we obtain from \eqref{4.15} that
\begin{multline}
	\frac{1}{2}\int_{\Omega} e^{2\lambda r^{-\beta}}|\Delta \varphi_{n + 1}|^2d\x - 
	\int_\Omega e^{2\lambda r^{-\beta}}|S\varphi_{n + 1}|^2d\x 
	+ \lambda^2 \int_{\partial \Omega} e^{2\lambda r^{-\beta}} |\nabla\varphi_{n + 1}|^2d\sigma(\x)
	+\epsilon \|\varphi_{n + 1}\|_H^2
	\\
	\leq
	\int_{\Omega} e^{2\lambda r^{-\beta}}|{\bf F}(U_{n}) - {\bf F}(U^*)|^2d\x
	+ \lambda^2 \int_{\partial \Omega}e^{2\lambda r^{-\beta}} |{\bf h} - {\bf h}^*|^2d\sigma(\x)
	+\epsilon
	\|
		 U^*		
	\|^2_H.
	\label{4.17}
\end{multline}
We now apply the Carleman estimate in \eqref{4.3333} to estimate the left-hand side of \eqref{4.17}. Combining \eqref{4.3333} for the vector-valued function $\varphi_{n + 1}$ and \eqref{4.17}, we obtain
\begin{multline}
	-C \lambda \int_{\partial\Omega}e^{2\lambda r^{-\beta}}   |\nabla \varphi_{n + 1}|^2 d\sigma(\x)
	+C \int_{\Omega}e^{2\lambda r^{-\beta}}\big[
		 \lambda^3  |\varphi_{n + 1}|^2
        + \lambda  |\nabla \varphi_{n + 1}|^2
		 \big]d\x - 
	\int_\Omega e^{2\lambda r^{-\beta}}|S\varphi_{n + 1}|^2d\x 
	\\
	+ \lambda^2 \int_{\partial \Omega} e^{2\lambda r^{-\beta}} |\nabla\varphi_{n + 1}|^2d\sigma(\x)
	+\epsilon \|\varphi_{n + 1}\|_H^2
	\leq
	\int_{\Omega} e^{2\lambda r^{-\beta}}|{\bf F}(U_{n}) - {\bf F}(U^*)|^2d\x
	\\
	+ \lambda^2 \int_{\partial \Omega}e^{2\lambda r^{-\beta}} |{\bf h} - {\bf h}^*|^2d\sigma(\x)
	+\epsilon
	\|
		 U^*		
	\|^2_H.
	\label{4.18}
\end{multline}
Letting $\lambda$ large, we can simplify \eqref{4.18} as
\begin{multline}
\int_{\Omega}e^{2\lambda r^{-\beta}}  \big[
		 \lambda^3 |\varphi_{n+1}|^2
        + \lambda   |\nabla \varphi_{n+1}|^2
		 \big]d\x
	+ \lambda^2 \int_{\partial \Omega} e^{2\lambda r^{-\beta}} |\nabla\varphi_{n+1}|^2d\sigma(\x)
	+ \epsilon \|\varphi_{n+1}\|_H^2
	\\
	\leq 
	C \Big[
	 \int_{\Omega} e^{2\lambda r^{-\beta}}|{\bf F}(U_{n}) - {\bf F}(U^*)|^2d\x
	 + \lambda^2 \int_{\partial \Omega}e^{2\lambda r^{-\beta}} |{\bf h} - {\bf h}^*|^2d\sigma(\x)
	+ \epsilon
	\|
		  U^*		
	\|^2_H\Big].
	\label{4.19}
\end{multline}
We now employ the Lipschitz continuity of ${\bf F}.$
It follows from \eqref{LipschitzF} and \eqref{4.19} that
\begin{multline}
\int_{\Omega}e^{2\lambda r^{-\beta}}  \big[
		 \lambda^3 |\varphi_{n+1}|^2
        + \lambda   |\nabla \varphi_{n+1}|^2
		 \big]d\x
	+ \lambda^2 \int_{\partial \Omega} e^{2\lambda r^{-\beta}} |\nabla\varphi_{n+1}|^2d\sigma(\x)
	+ \epsilon \|\varphi_{n+1}\|_H^2
	\\
	\leq 
	C\Big[{\bf M}\int_{\Omega} e^{2\lambda r^{-\beta}}[|U_{n} - U^*|^2 + |\nabla (U_n - U^*)|^2]d\x
	\\
	+ \lambda^2 \int_{\partial \Omega}e^{2\lambda r^{-\beta}} |{\bf h} - {\bf h}^*|^2d\sigma(\x)
	+\epsilon
	\|
		  U^*		
	\|^2_H\Big].
	\label{4.20}
\end{multline}
Recall from \eqref{4.13} that $\varphi_{n + 1} = U_{n + 1} - U^*$ and that $U_n \to \overline U$ in $H$ with respect to the norm $\|\cdot\|_{\lambda, \beta, \epsilon}$. When the parameters $\lambda$, $\beta$, and $\epsilon$ are fixed, we can conclude that $U_n \to \overline U$ with respects to all of the norms $L^2(\Omega)$, $H^1(\Omega)$, and $H^p(\Omega).$
Letting $n$ in \eqref{4.20} to $\infty$, we have
\begin{multline}
\int_{\Omega}e^{2\lambda r^{-\beta}}  \big[
		 \lambda^3 |\overline U - U^*|^2
        + \lambda   |\nabla (\overline U - U^*)|^2
		 \big]d\x
	+ \lambda^2 \int_{\partial \Omega} e^{2\lambda r^{-\beta}} |\nabla (\overline U - U^*)|^2d\sigma(\x)
	+ \epsilon \|\overline U - U^*\|_H^2
	\\
	\leq 
	C\Big[{\bf M}\int_{\Omega} e^{2\lambda r^{-\beta}}[|\overline U - U^*|^2 + |\nabla (\overline U - U^*)|^2]d\x
	\\
	+ \lambda^2 \int_{\partial \Omega}e^{2\lambda r^{-\beta}} |{\bf h} - {\bf h}^*|^2d\sigma(\x)
	+\epsilon
	\|
		  U^*		
	\|^2_H\Big].
	\label{4.2020}
\end{multline}
Letting $\lambda$ be sufficiently large, we can use the first integral in the left-hand side of \eqref{4.2020} to dominate the first integral in the right-hand side of \eqref{4.2020}.
We obtain \eqref{4.21}.
\end{proof}

Theorem \ref{thm 4.1} leads to Algorithm \ref{alg1} to solve the time-reduction model \eqref{2.8}.
\begin{algorithm}[h!]
\caption{\label{alg1} Computing Numerical Solutions to \eqref{2.8}}
	\begin{algorithmic}[1]
	\State \label{s1} Select a regularization parameter $\epsilon$ and a minimum threshold $\kappa_0 > 0$.
	 \State \label{s2}  Initialize with $n = 0$ and an initial solution $U_0 \in H$.
		\State \label{step update} 		
		Update $U_{n + 1} = \Phi_{\lambda, \beta, \epsilon}(U_n)$ by minimizing $J^{U_n}_{\lambda, \beta, \epsilon}$ in $H$.   
	\If {$\|U_{n + 1} - U_n\|_{L^2(\Omega)} > \kappa_0$}
		\State Increment $n$ to $n + 1$.
		\State Return to Step \ref{step update}.
	\Else	
		\State Finalize the solution as $U_{\rm comp} = U_{n + 1}$.
	\EndIf
\end{algorithmic}
\end{algorithm}

\begin{remark}
In the statement of Theorem \ref{thm 4.1}, we imposed a technical condition regarding the Lipschitz continuity of the nonlinear and nonlocal operator $F$ in \eqref{Lipschitz}, and consequently, the nonlinear function ${\bf F}$ in \eqref{LipschitzF}. One might thus assume that the Lipschitz condition is a necessary requirement for Algorithm \ref{alg1} and Algorithm \ref{alg3}. However, this assumption can be relaxed under certain circumstances. 
Suppose we possess an upper bound on $U^*$, say $\|U^*\|_{C^1(\overline \Omega)} \leq M$, where $M$ is a positive constant. In such cases, we only need to compute the solution within the set $\{U \in H: \|U\|_{C^1(\overline \Omega)} \leq M\}$. To facilitate this, we define the following functions
 \[
 	\chi_M(\x, s, {\bf p}) = \left\{
		\begin{array}{ll}
			1 & s^2 + |{\bf p}|^2 \leq M,\\
			\in (0, 1) &M < s^2 + |{\bf p}|^2  \leq 2M,\\
			0 &s^2 + |{\bf p}|^2 > 2M,
		\end{array}
	\right.
	\quad
	\mbox{and}
	\quad
	{\bf F}_M = \chi_M \bf{F}.
 \]
It is obvious that $U^*$ satisfies
\begin{equation*}
	\left\{
		\begin{array}{ll}
			\Delta U(\x) - SU(\x) + {\bf F}_MU(\x) = 0 &\x \in \Omega,\\
			U(\x) = 0 &\x \in \partial \Omega,\\
			\partial_{\nu} U(\x) = {\bf h}(\x) & \x \in \partial \Omega.
		\end{array}
	\right.
\end{equation*}
We can then compute $U^*$ using Algorithm \ref{alg1}, replacing ${\bf F}$ with ${\bf F}_M$. In cases where we do not possess an upper bound for $\|U^*\|_{C^1(\overline \Omega)}$, we can apply the aforementioned procedure for some value of $M$ to compute $U_{\text{comp}}^M$, and subsequently, let $M \to \infty$. The convergence of the computed solution is guaranteed as long as the true solution $U^*$ lies within the class $C^1$.
\end{remark}

\section{Numerical study} \label{num}

In this section, we present some numerical examples.
The first step to generate the noisy simulated data, see the following subsection.

\subsection{Data generation}
We consider the case when $d = 2$.
Set $\Omega = (-R, R)^d$ where $R = 1$.
Fix a number $N_{\x}$. We arrange an $N_\x \times N_\x$ uniform partition of $\overline \Omega$ with the grid points 
\[
	\mathcal G = \big\{ \x_{ij} = 
	(-R + (i-1)\delta_{\x},  -R + (j - 1)\delta_{\x}): 1 \leq i, j \leq N_{\x}\big\} \subset \overline \Omega
\]
where $\delta_\x = \frac{2R}{N_\x - 1}$. Here, $N_\x = 81.$
We also discretize the time domain $[0, T]$ by $N_T$ points 
\[
	\mathcal T = \{t_l = (l - 1)\delta_t: 1 \leq l \leq N_T\}
\]
where $\delta_t = \frac{T}{N_T - 1}$ for $N_T = 200$ and $T = 2$.
To solve the forward problem, we employ a combination of explicit and implicit schemes for $u(x_i, y_j , t_l)$, where $1 \leq i, j \leq N_\x$ and $1 \leq l \leq N_T$. The algorithm used to implement this approach is described in Algorithm \ref{alg2}.
\begin{algorithm}[h!]
\caption{\label{alg2}The procedure to generate the data}
	\begin{algorithmic}[1]
	\State Set $u(\x_{ij}, t_1) = u(\x_{ij}, t_2) =  g(\x_{ij})$ for $\x_{ij} \in \mathcal G$.
	 \For{$l = 3$ to $N_T$}
	 	\State \label{linear} Solve the boundary value linear elliptic problem
		\begin{equation}
		\left\{
			\begin{array}{ll}
			\frac{w - 2u(\x, t_{l-1}) + u(\x, t_{l - 2})}{\delta_t^2} = \Delta w + \mathcal F(u(\x, t_{l-1})) &\x \in \mathcal{G},\\
			w(\x) = 0 &\x \in \mathcal G \cap \partial \Omega.
			\end{array}
		\right.
		\label{discretize}
		\end{equation}
		for a function $w$.
		\State Set $u(\x, t_{l}) = w(\x)$ for all $\x \in \mathcal G.$
	 \EndFor
	 \State \label{rand}Compute the noisy data
	 \[
	 	h(\x, t_l) = \partial_{\nu} u(\x, t_{l}) (1 + \delta \mbox{rand})
	\] on $\mathcal G \times \mathcal T$ where $\delta$ is the noise level.
\end{algorithmic}
\end{algorithm}
Solving for $w$ in Step \ref{linear} of Algorithm \ref{alg2} is standard since \eqref{discretize} is linear with respect to $w$. One can download a package to solve elliptic PDE with given Dirichlet boundary data on https://github.com/nhlocnguyenIP/Elliptic to solve \eqref{discretize}. In Step \ref{rand} of Algorithm \ref{alg2}, the function ``rand" gives a uniformly distributed random number in the range $[-1, 1].$
In all of our numerical tests, the noise level $\delta$ is $10\%.$

\subsection{The implementation for the inverse problem}

\begin{algorithm}[h!]
\caption{\label{alg3}The procedure solve Problem \ref{isp}}
	\begin{algorithmic}[1]
		\State \label{N} Choose a cut-off number $N$.
		\State \label{SolvePDEs} Use Algorithm \ref{alg1} to compute a solution $U_{\rm comp}$ to \eqref{2.8}.
		\State \label{comp_g} Write $U_{\rm comp} = (u_1^{\rm comp}, \dots, u_N^{\rm comp}).$ Compute the source function $g$ by the following formula
		\[
			g_{\rm comp}(\x) = \sum_{n = 1}^N u_n^{\rm comp}(\x) \Psi_n(0)
		\]
		for all $\x \in \Omega.$
	\end{algorithmic}
\end{algorithm}

The method for calculating the source function $g$ is detailed in Algorithm \ref{alg3}. It is important to note that the initial condition $u_t(\x, 0) = 0$, $\x \in \Omega$, is not explicitly used in the derivation of Algorithm \ref{alg3}. Nevertheless, the uniqueness of Problem \ref{isp} cannot be assured without the knowledge of $u(\x , 0)$, $\x \in \Omega$. This implies that the solution obtained might not be the desired one, indicating that this condition is implicitly incorporated in our approach.

We interpret our choice of $N$ in Step \ref{N}. 
This choice depends on how good the approximation of the function $u$ and its approximation in \eqref{2.1}. We, however, the internal information of $u$ is unavailable.
We only test the approximation on $\partial \Omega$.
Take the data $h$ at a point $\x^* \in \partial \Omega$.
For each $N$, define
\[
	e_N(t) = \Big|h(\x^*, t) - \sum_{n = 1}^N h_n(\x^*) \Psi_N(t)\Big|
\]
where $h_n(x^*) = \ds\int_0^T h(\x^*, t) \Psi_n(t)dt.$
A number $N$ is chosen if $\|e_N\|_{L^\infty([0, T])} < \varepsilon$ for some $\varepsilon \ll 1.$
In our computation, we choose $\varepsilon = 5\times 10^{-3}$ and $N = 40$.
Figure \ref{fig_choiceN} illustrates this procedure when the data $h$ is taken from Test 1 below.
\begin{figure}[h!]
	\subfloat[$N = 20$, the functions $h(\x^*, t)$ (blue, solid) and its approximation  $\sum_{n = 1}^N h_n(\x^*) \Psi_N(t).$]{\includegraphics[width=.3\textwidth]{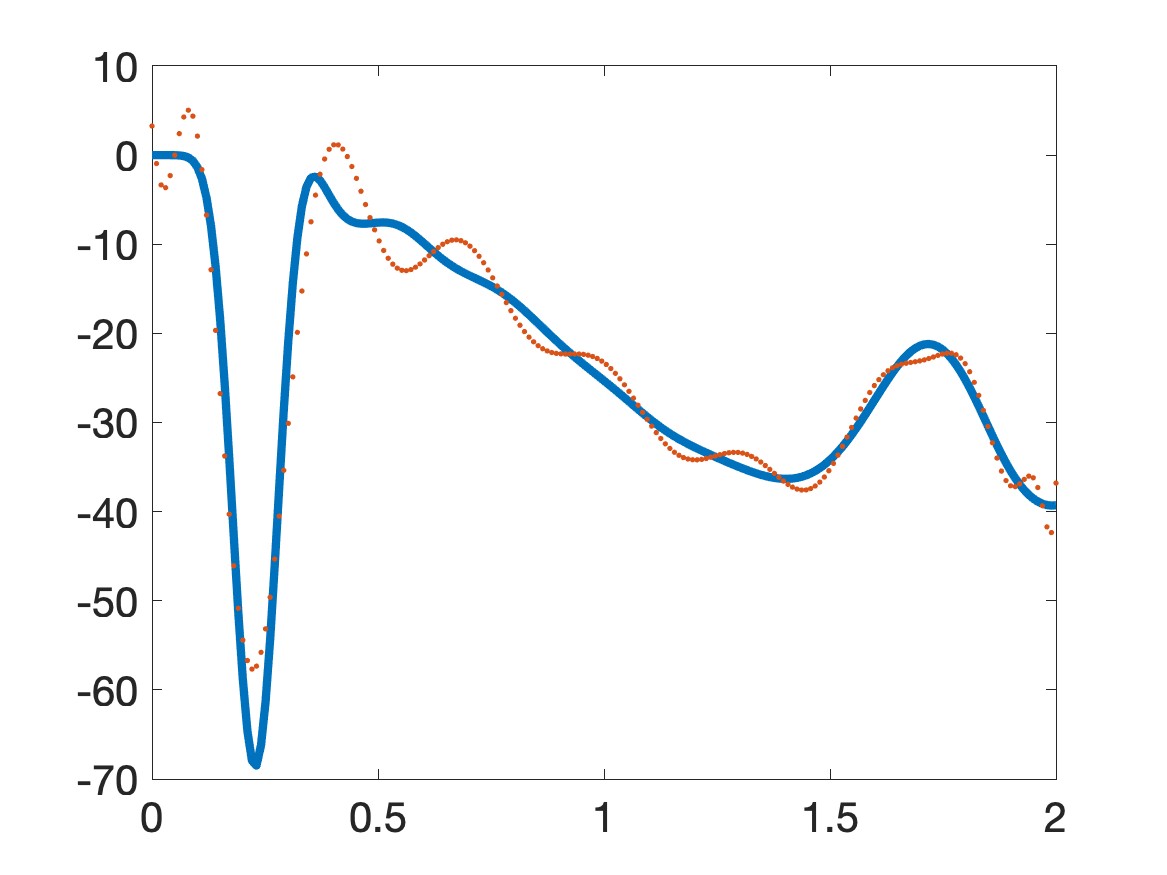}}
	\quad 
	\subfloat[$N = 30$, the functions $h(\x^*, t)$ (blue, solid) and its approximation  $\sum_{n = 1}^N h_n(\x^*) \Psi_N(t).$]{\includegraphics[width=.3\textwidth]{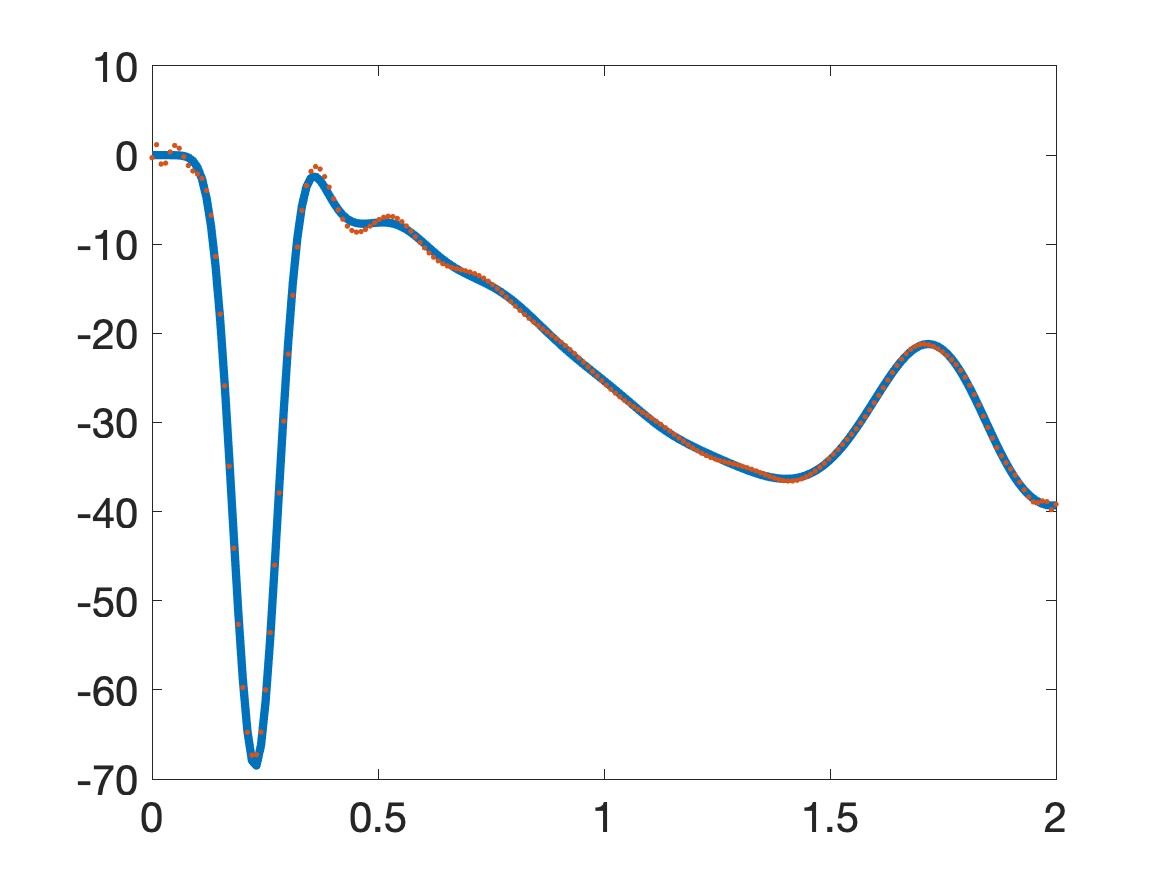}}
	\quad 
	\subfloat[$N = 40$, the functions $h(\x^*, t)$ (blue, solid) and its approximation  $\sum_{n = 1}^N h_n(\x^*) \Psi_N(t).$]{\includegraphics[width=.3\textwidth]{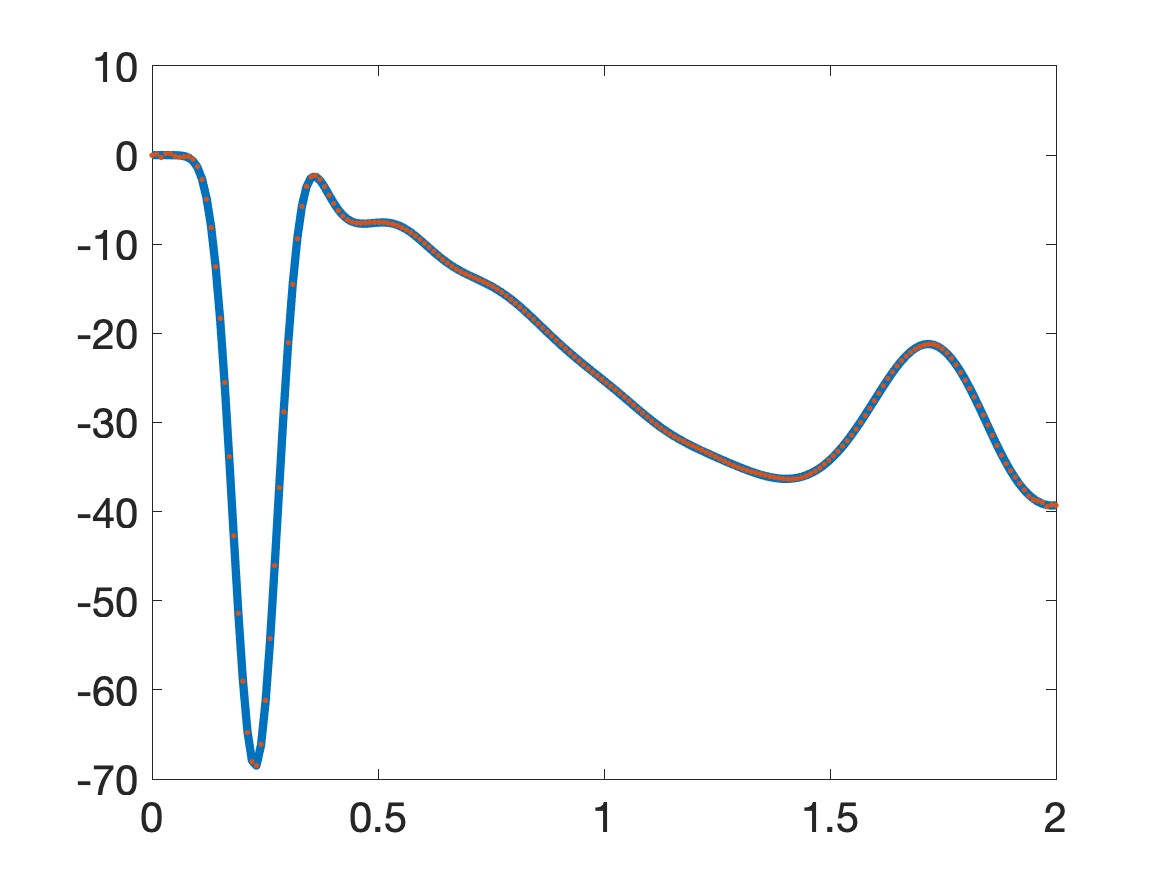}}
	
	\subfloat[$N = 20$, the functions $e_N(t)$.]{\includegraphics[width=.3\textwidth]{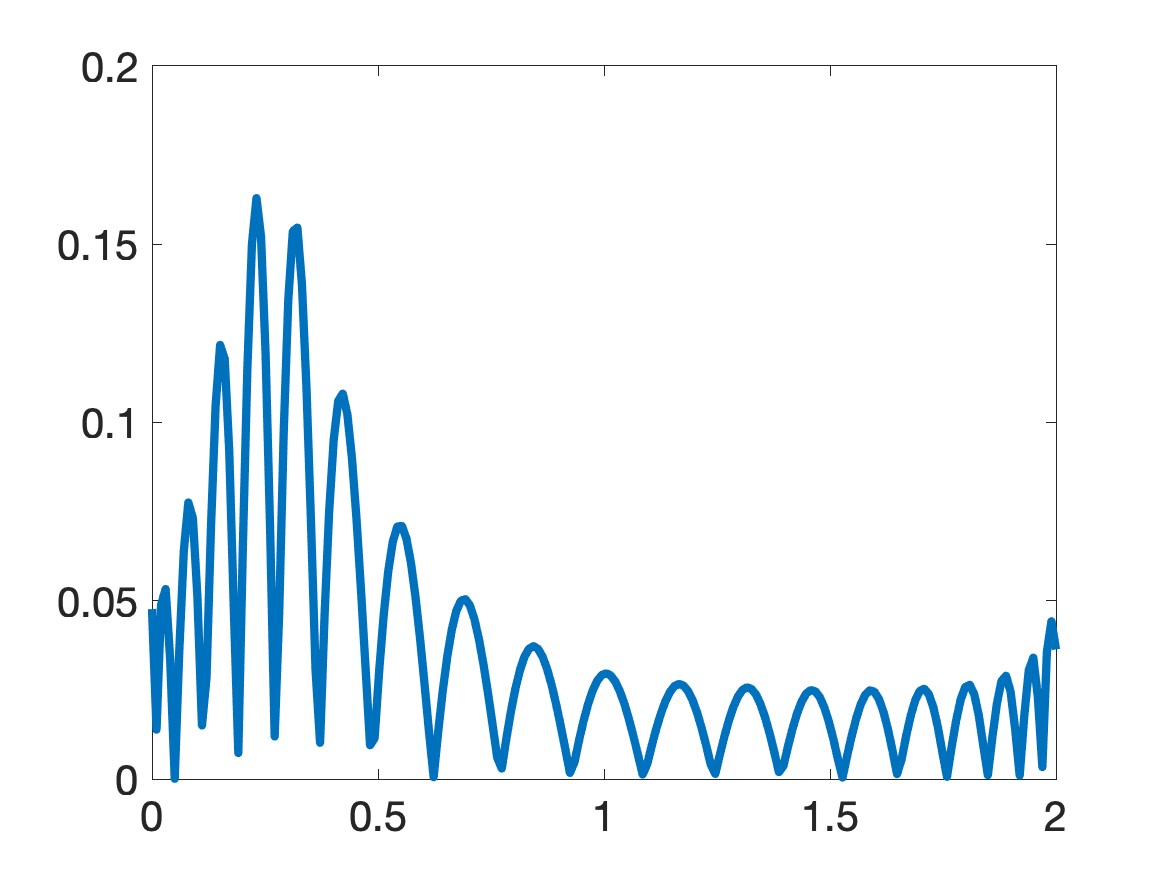}}
	\quad
	\subfloat[$N = 30$, the functions $e_N(t)$.]{\includegraphics[width=.3\textwidth]{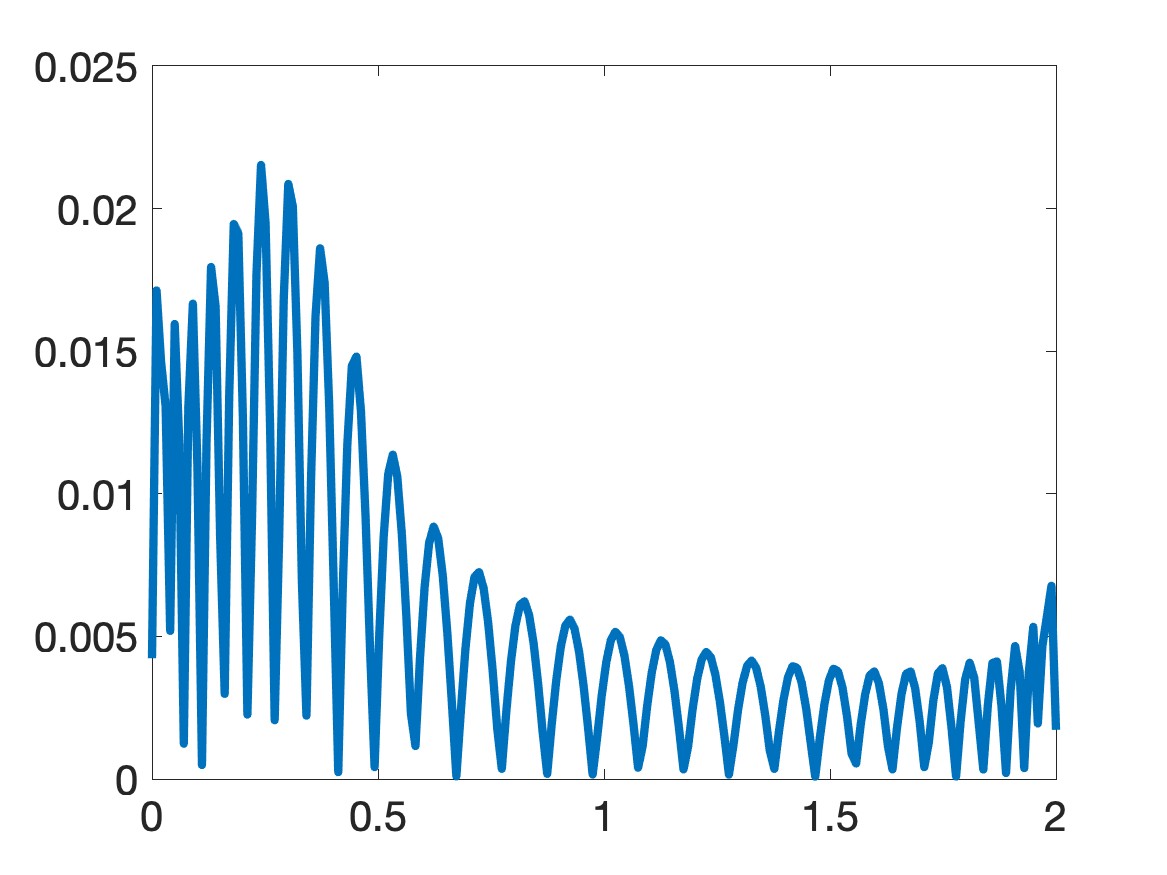}}
	\quad
	\subfloat[$N = 40$, the functions $e_N(t)$.]{\includegraphics[width=.3\textwidth]{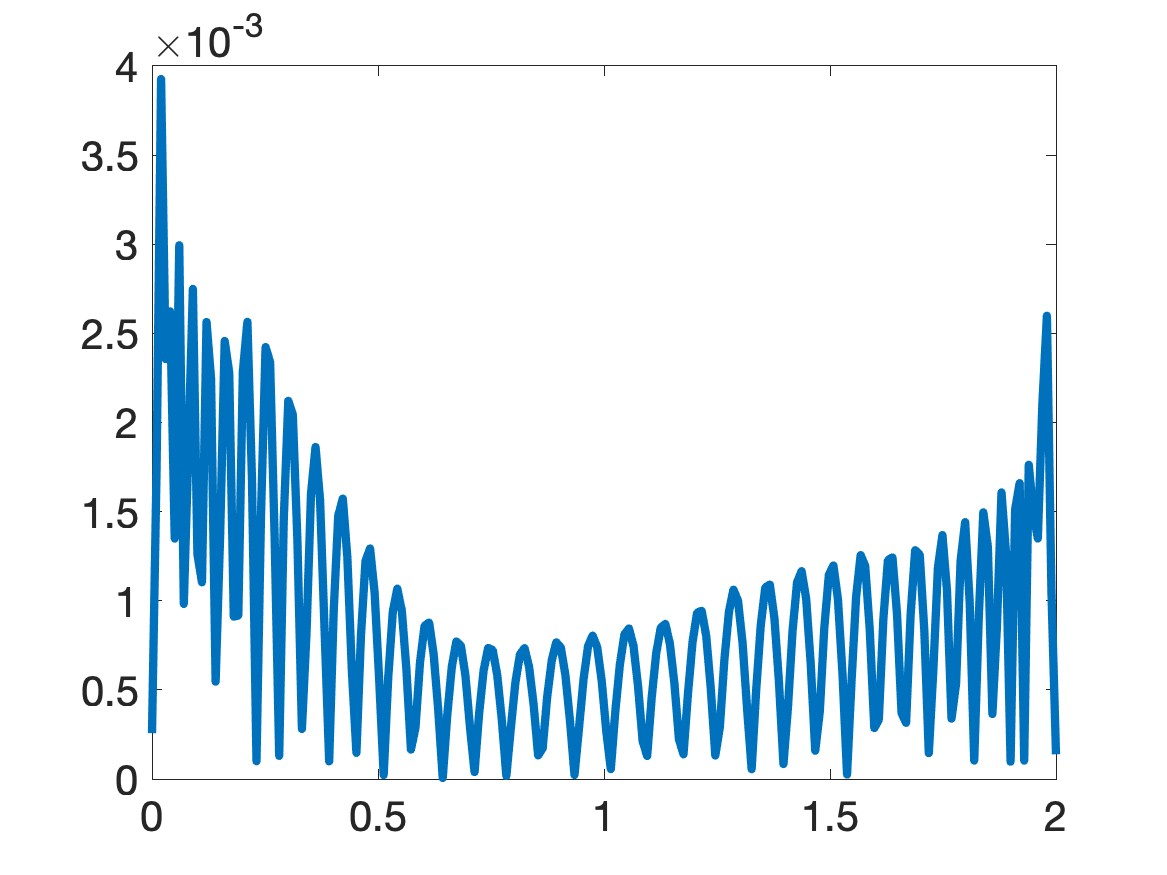}}
	\caption{\label{fig_choiceN} It is evident that when $N = 40$, the data $h(\x^*, \cdot)$ is well approximated by cutting its Fourier series. The function $h$ in these figures is the data for Test 1 in this section. The point $\x^* = (-1, 0).$}
\end{figure}

We refer the reader to \cite{Nguyen:AVM2023} for the implementation of Step \ref{SolvePDEs}.
In this step, the parameters $\epsilon = 10^{-13}$, $\lambda = 6$ and $\beta = 10$ are chosen by a trial and error process.
We take a reference test (test 1), which we assume to know the true solution. We then run Algorithm \ref{alg3} with many values of these parameters until we obtain acceptable solutions. 
We then use these parameters for all other tests.
 Step \ref{comp_g} is straightforward.

\subsection{Numerical examples}

We provide three (3) tests.

{\bf Test 1.} We consider the case when
\[
	\mathcal{F}(u) = \min\Big\{u^2 + |\nabla u|, 30\Big\} + \int_0^t u(\x, s) ds
\] and the true source function is given by
\[
	g_{\rm true}(\x) = 
	\left\{
		\begin{array}{ll}
			10 & \mbox{if }  x^2 + 3y^2 < 0.8^2,\\
			0 &\mbox{otherwise}.
		\end{array}
	\right.
\]
The solutions of this test are displayed in Figure \ref{fig_test1}.
\begin{figure}[h!]
\begin{center}
	\subfloat[The true function $g_{\rm true}$]{\includegraphics[width=.3\textwidth]{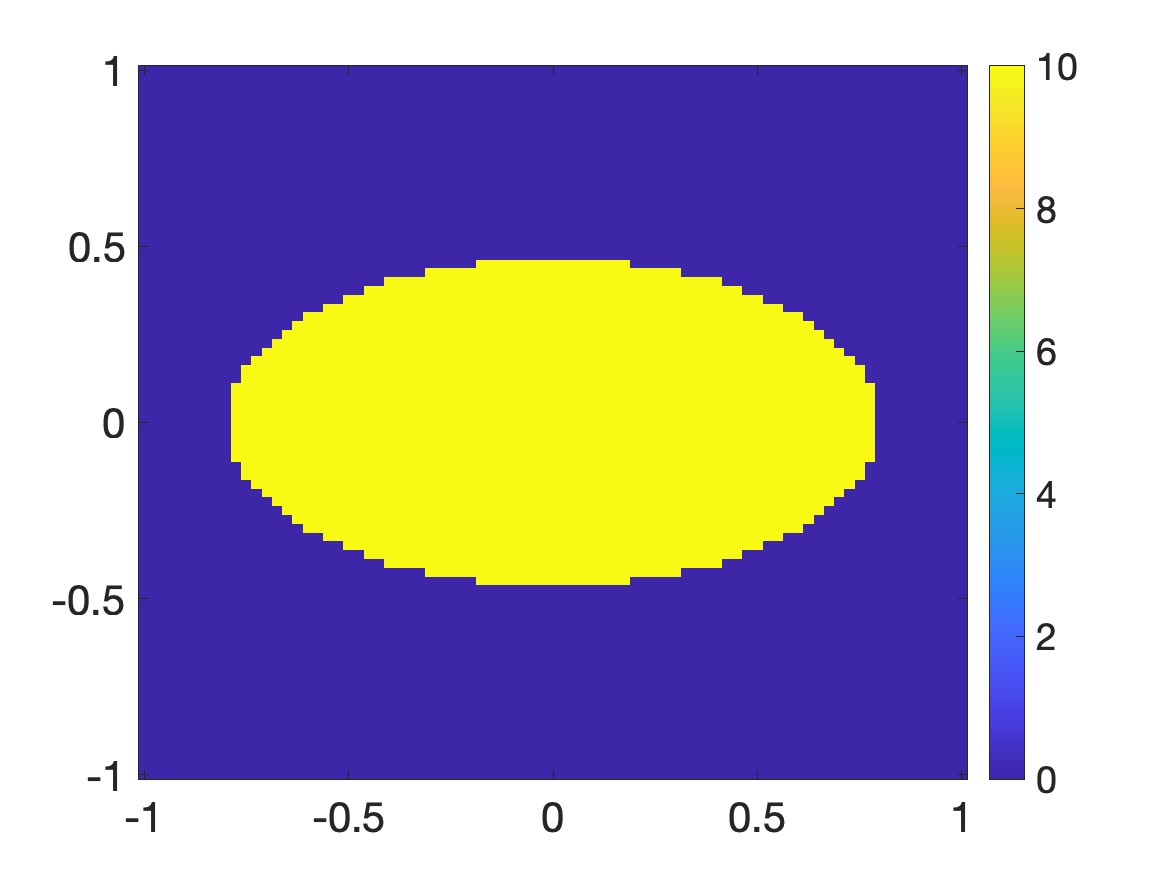}}
	\quad
	\subfloat[The computed function $g_{\rm comp}$]{\includegraphics[width=.3\textwidth]{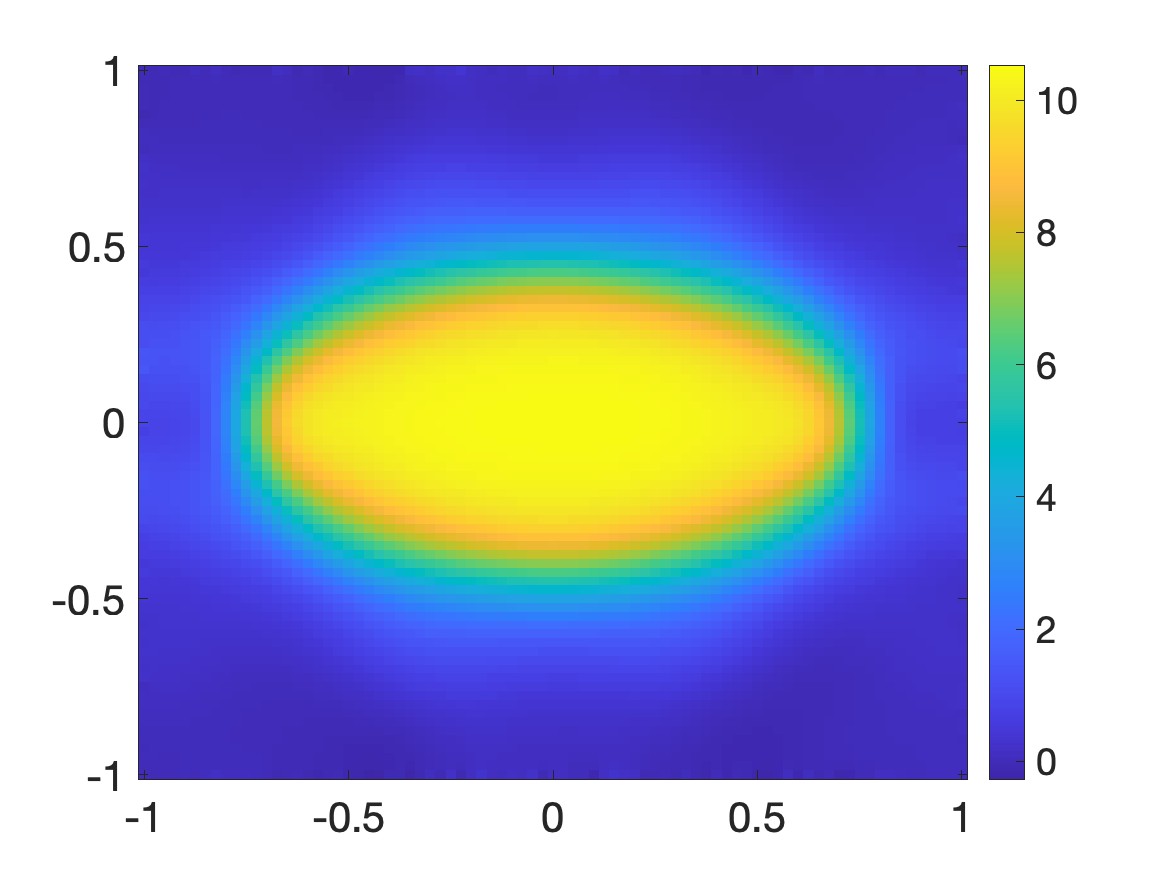}}
	\quad
	\subfloat[The difference $\frac{|g_{\rm comp} - g_{\rm true}|}{\|g_{\rm true}\|_{L^{\infty}}}$]{\includegraphics[width=.3\textwidth]{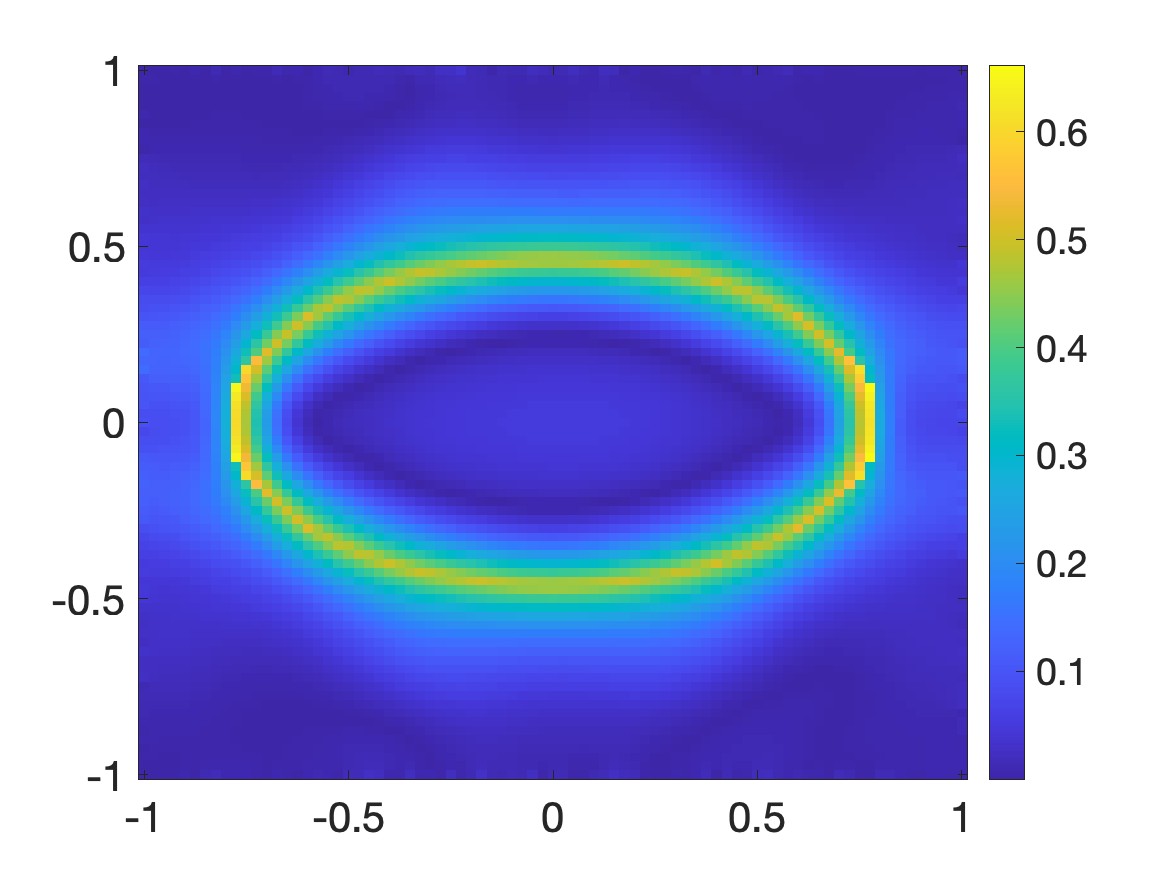}}
	\caption{\label{fig_test1} True and numerical solutions of test 1. 
	It is interesting mentioning that although the true solution has a high value (10) and the size of the ``ellipse inclusion" is not small, our method can deliver a satisfactory solution without requesting a good initial guess. The error in computation occurs mostly at the boundary of the inclusion.
	}
	\end{center}
\end{figure}

This test is challenging since the nonlinearity is not smooth. The growth of $u$ is of the quadratic function.
It is also interesting with the presence of the nonlocal term.
However, it is evident that our method provides a good numerical solution, although this test is complicated. 
The maximum value of the reconstructed source function $g_{\rm comp}$ in the ellipse inclusion is 10.5148 (the relative error is 5.15\%).

{\bf Test 2.} We test the case when 
\[
	\mathcal{F}(u) =  \frac{1}{\sqrt{u^2 + |\nabla u|^2}} + \int_0^t \frac{u(\x, s)}{1 + s^2} ds
\] and the true source function is given by
\[
	g_{\rm true}(\x) = 
	\left\{
		\begin{array}{ll}
			5 & \mbox{if }  \max\{|x - 0.5|/0.35, |y|/0.8\} < 1,\\
			4 & \mbox{if }  (x + 0.5)^2 + y^2 > 0.35^2,\\
			0 &\mbox{otherwise}.
		\end{array}
	\right.
\]
The support of the true source function consists of two ``inclusions", one rectangle and one disk.
The solutions of this test are displayed in Figure \ref{fig_test2}.
\begin{figure}[h!]
\begin{center}
	\subfloat[The true function $g_{\rm true}$]{\includegraphics[width=.3\textwidth]{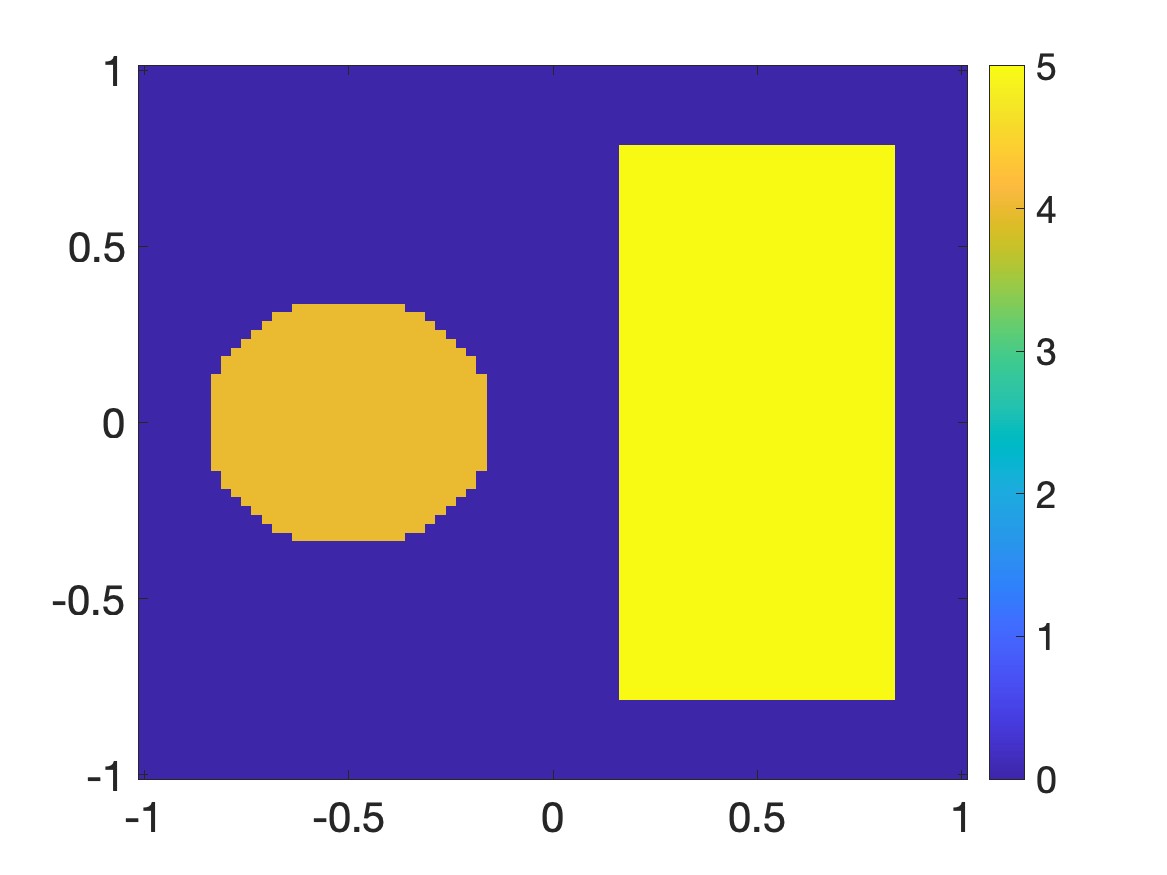}}
	\quad
	\subfloat[The computed function $g_{\rm comp}$]{\includegraphics[width=.3\textwidth]{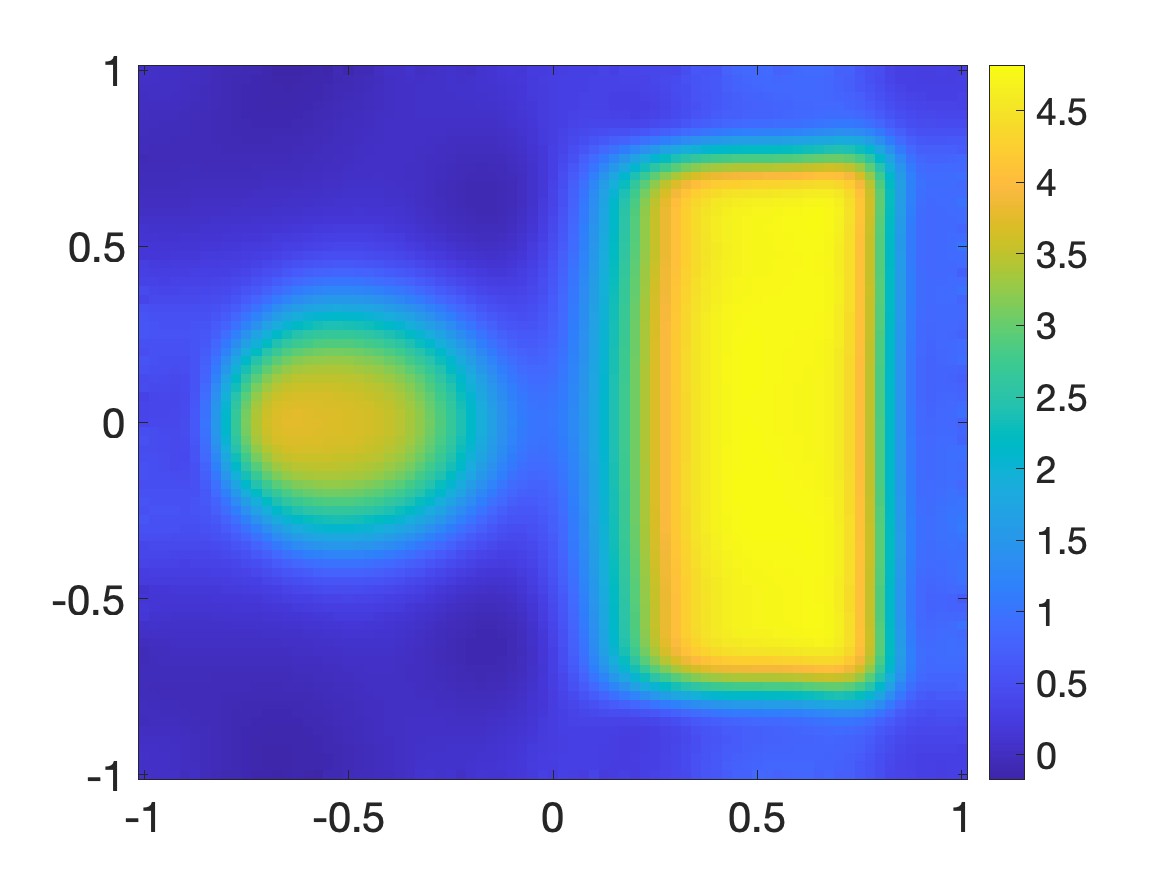}}
	\quad
	\subfloat[The difference $\frac{|g_{\rm comp} - g_{\rm true}|}{\|g_{\rm true}\|_{L^{\infty}}}$]{\includegraphics[width=.3\textwidth]{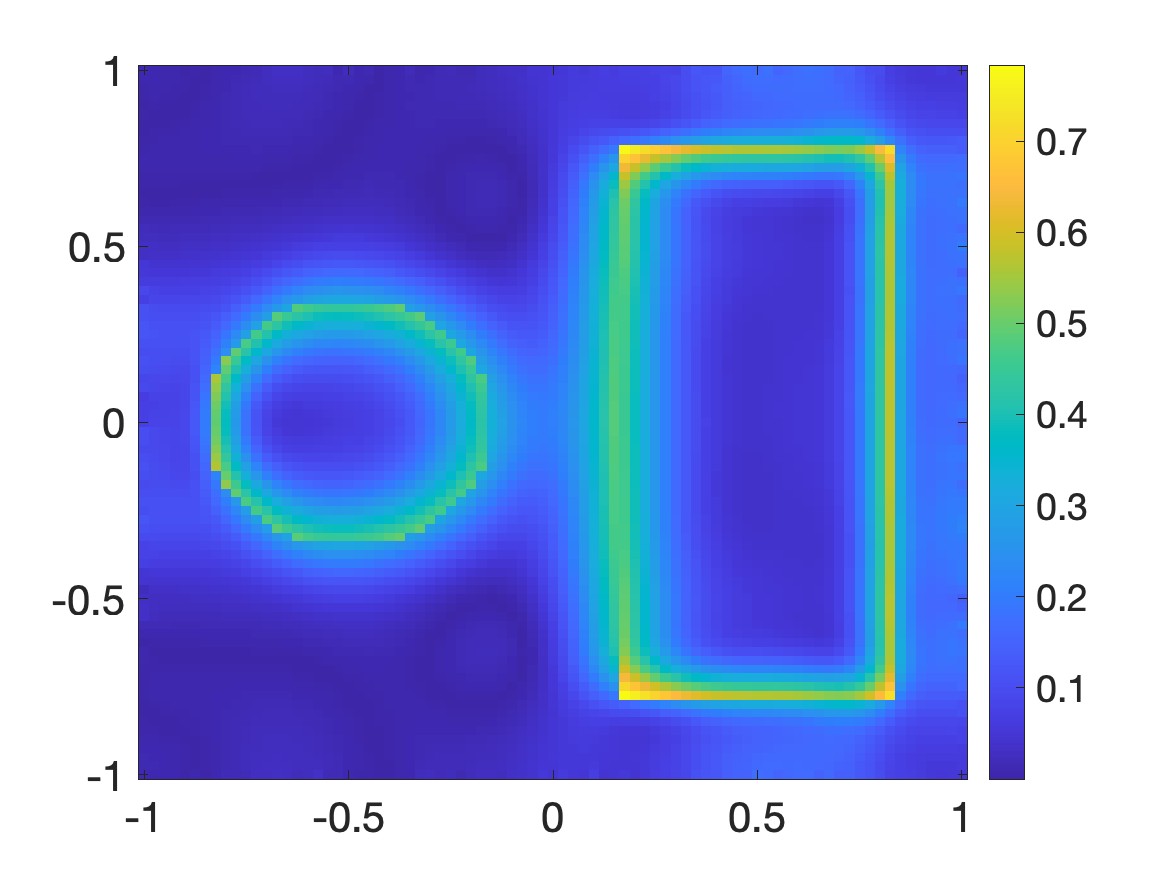}}
	\caption{\label{fig_test2} True and numerical solutions of test 2. 
	Like in test 1, our method can deliver a satisfactory solution for test 2 without requesting a good initial guess. Also, the error in computation occurs mostly at the boundary of the inclusion.
	}
	\end{center}
\end{figure}

It is evident that our method provides a good numerical solution, although the true source function has two inclusions, in each of which, the source function takes different values. 
These values are high (5 and 4).
The maximum value of the reconstructed source function $g_{\rm comp}$ in the rectangular inclusion is 4.8 (the relative error is 3.74\%).
The maximum value of the reconstructed source function $g_{\rm comp}$ in the circular inclusion is 3.75 (the relative error is 6.25\%).

{\bf Test 3.} We test the case when 
\[
	\mathcal{F}(u) =  u\ln(u^2 + 1) + u_x + u_y + \int_0^t u(\x, s) ds
\] and the true source function is given by
\[
	g_{\rm true}(\x) = 
	\left\{
		\begin{array}{ll}
			7 & \mbox{if }  \max\{|x + 0.6|/0.25, |y - 0.2|/0.7\} < 1,\\
			7 & \mbox{if } \max\{|x + 0.5|/0.25, |y|/0.7\} < 1, \\
			0 &\mbox{otherwise}.
		\end{array}
	\right.
\]
The support of the true source function is an $L$ shape conclusion.
The solutions of this test are displayed in Figure \ref{fig_test3}.
\begin{figure}[h!]
\begin{center}
	\subfloat[The true function $g_{\rm true}$]{\includegraphics[width=.3\textwidth]{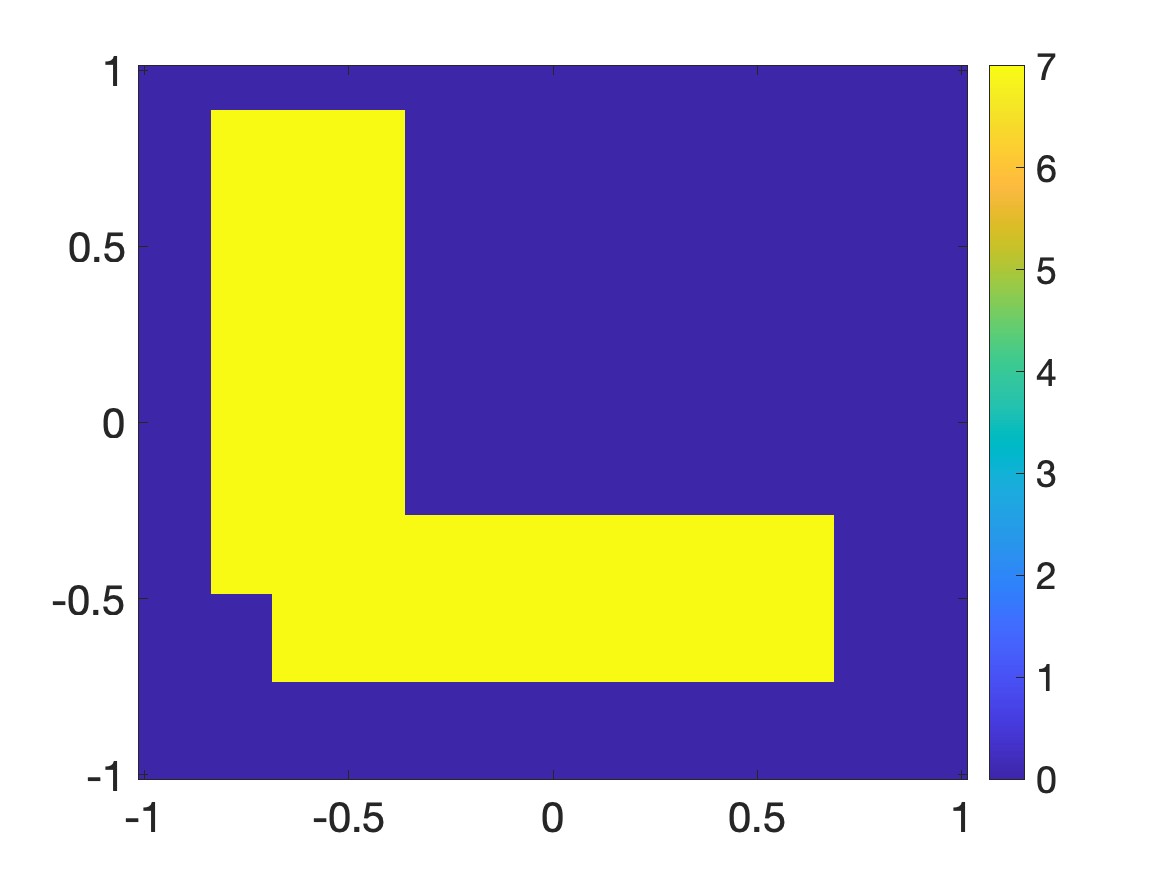}}
	\quad
	\subfloat[The computed function $g_{\rm comp}$]{\includegraphics[width=.3\textwidth]{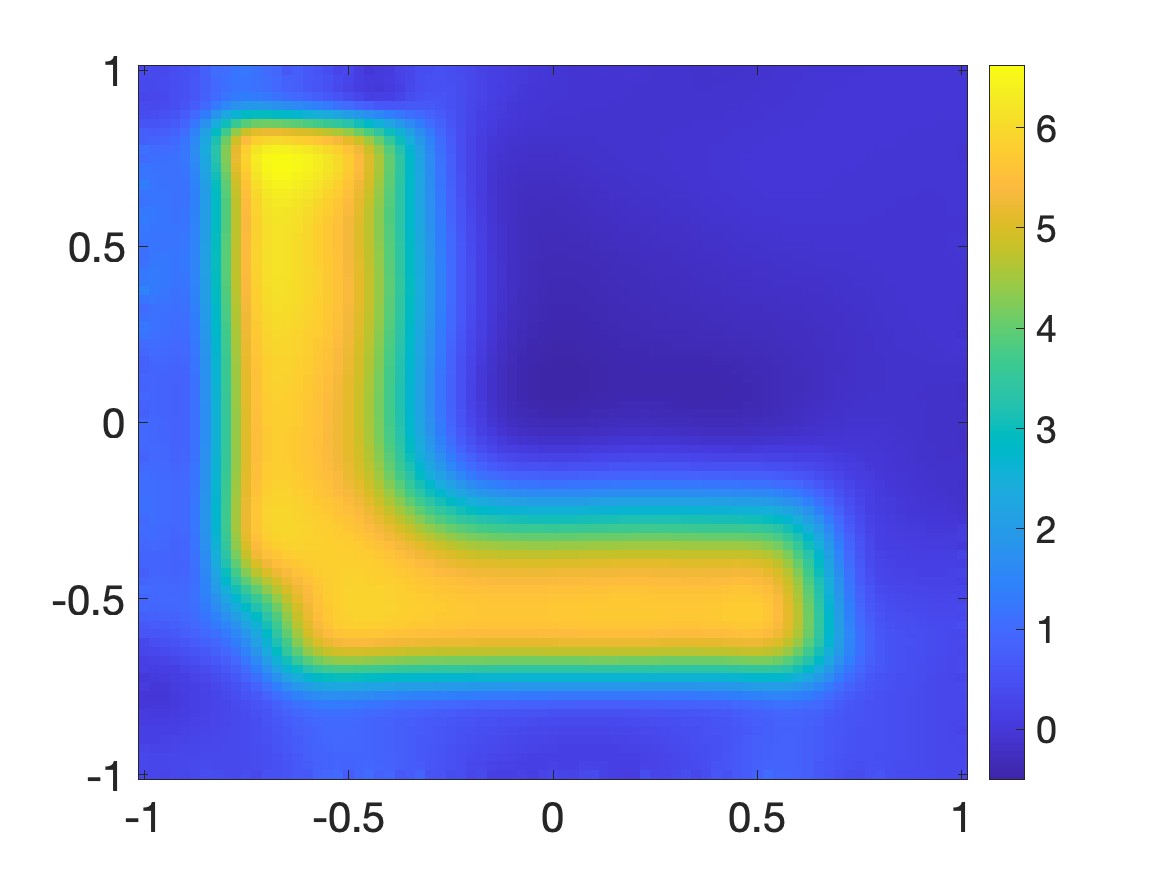}}
	\quad
	\subfloat[The difference $\frac{|g_{\rm comp} - g_{\rm true}|}{\|g_{\rm true}\|_{L^{\infty}}}$]{\includegraphics[width=.3\textwidth]{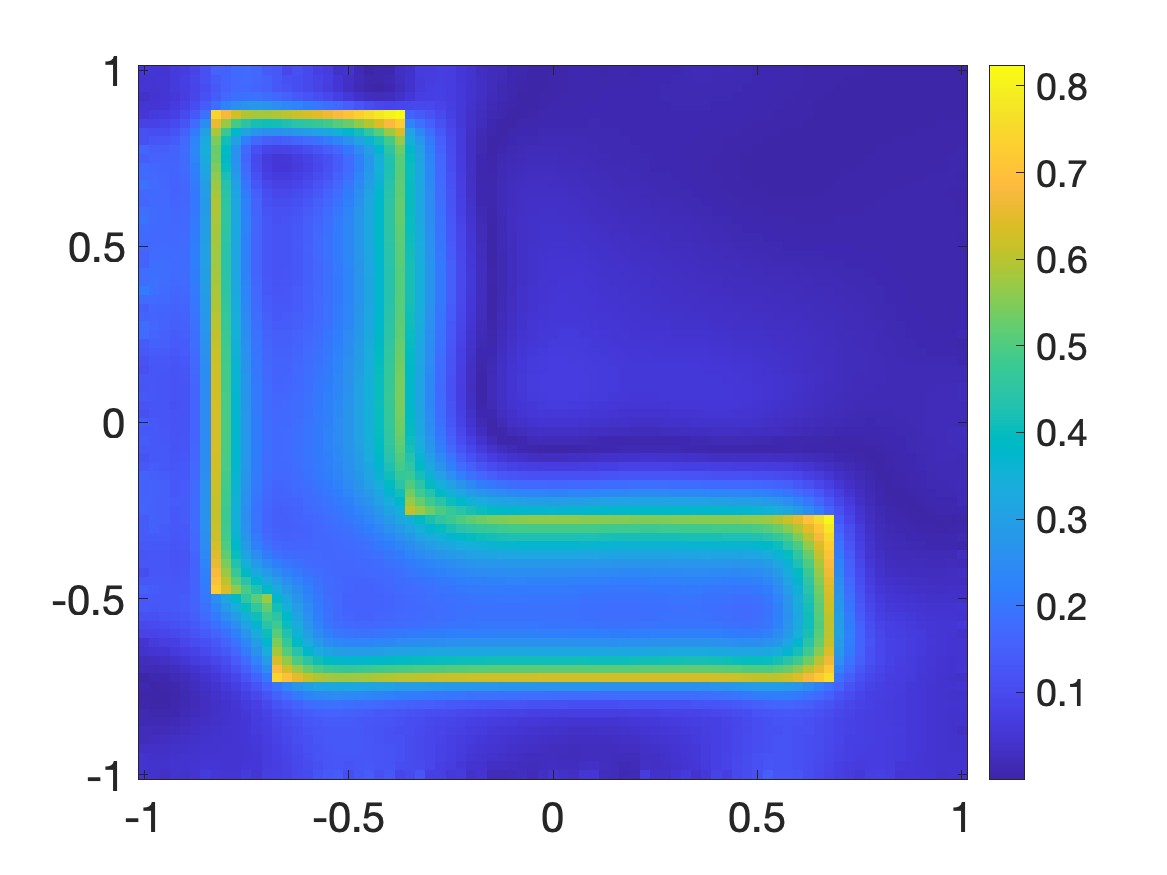}}
	\caption{\label{fig_test3} True and numerical solutions of test 3. 
	Like in tests 1 and 2, our method can deliver a satisfactory solution for test 2 without requesting a good initial guess. 
	Also, the error in computation occurs mostly at the boundary of the inclusion.
	}
	\end{center}
\end{figure}

Although the structure of the true source function is complicated and the value of the function is high (7), the numerical solution is out of expectation.
The maximum value of the reconstructed source function $g_{\rm comp}$ in the $L$ shape inclusion is 6.61 (the relative error is 5.5\%).

\section{Concluding remarks} \label{sec6}

The primary objective of this research paper is to tackle the task of computing initial conditions for quasi-linear and nonlocal hyperbolic equations. To achieve this goal, we propose the time dimensional reduction approach that involves approximating the solution of the hyperbolic equation through the truncation of its Fourier expansion in the time domain. By employing the polynomial-exponential basis, we effectively eliminate the time variable, resulting in a transformed system comprising quasi-linear elliptic equations. 
In order to globally solve this system without the requirement of a well-informed initial guess, we employ the powerful Carleman contraction principle. This principle allows us to navigate the intricacies of the system and derive solutions that meet our objectives. To substantiate the effectiveness of our proposed method, we provide a comprehensive range of numerical examples that showcase its efficacy in practice.

The noteworthy advantage of the time dimensional reduction method extends beyond its ability to deliver accurate solutions. It also boasts exceptional computational speed, which is a significant advantage in addressing complex problems efficiently.
We used an iMac with a processor of 3.2 GHz, Intel Core i5 built in 2015 to compute numerical solutions for the tests above. It took about
3.92 minutes including exporting the pictures to complete all tasks of Algorithm \ref{alg3}.
The speed is impressive since we can solve a nonlinear and nonlocal problem in $2 + 1$ dimension within a short time.

 \section*{Acknowledgement}
The works of LHN  were partially supported  by National Science Foundation grants DMS-2208159 and by funds provided by the Faculty Research Grant program at UNC Charlotte Fund No. 111272.
The authors extend their gratitude to VIASM and the organizers of the PDE and Related Topics workshop held at VIASM in Hanoi, Vietnam, in July 2022. The initiation of this project took place at this meeting.

%\bibliographystyle{plain}
%\bibliography{../../../../mybib}

\end{document}